\documentclass[11pt]{amsart}

\usepackage[a4paper,margin=2.5cm]{geometry}
\usepackage[T1]{fontenc}

\usepackage[dvipsnames]{xcolor}

\usepackage{mathtools}
\usepackage{amssymb}
\usepackage{mathrsfs}
\usepackage{stmaryrd}
\usepackage{newtxtext}
\usepackage{newtxmath}
\usepackage[bbgreekl]{mathbbol}
\usepackage{relsize}

\DeclareMathAlphabet{\mathcal}{OMS}{cmsy}{m}{n}

\DeclareSymbolFontAlphabet{\mathbb}{AMSb}
\DeclareSymbolFontAlphabet{\mathbbl}{bbold}

\usepackage{etoolbox}
\usepackage{float}
\usepackage[inline,shortlabels]{enumitem}
\usepackage{verbatim}
\usepackage[normalem]{ulem}
\usepackage{marginnote}
\usepackage[colorinlistoftodos,textsize=scriptsize]{todonotes}

\usepackage{thmtools}

\usepackage{tikz}
\usetikzlibrary{cd}

\usepackage[
  backend=biber,
  style=alphabetic,
  sorting=anyt,
  doi=false,
  url=false,
  isbn=false,
  giveninits=false,
  maxnames=6
]{biblatex}

\usepackage[pdfusetitle,linktocpage]{hyperref}

\newcommand{\myshade}{85}
\colorlet{mylinkcolor}{Red}
\colorlet{mycitecolor}{Cerulean}
\colorlet{myurlcolor}{Plum}

\hypersetup{
  linkcolor=mylinkcolor!\myshade!black,
  citecolor=mycitecolor!\myshade!black,
  urlcolor=myurlcolor!\myshade!black,
  colorlinks=true
}

\usepackage[capitalize,noabbrev]{cleveref}

\mathchardef\mhyphen="2D
\theoremstyle{plain}
\newtheorem{theorem}{Theorem}[section]

\newtheorem{proposition}[theorem]{Proposition}

\newtheorem{lemma}[theorem]{Lemma}

\newtheorem{corollary}[theorem]{Corollary}

\theoremstyle{definition}

\newtheorem{notation}[theorem]{Notation}

\newtheorem{remark}[theorem]{Remark}

\newcommand{\IA}{\mathbb{A}}

\newcommand{\IC}{\mathbb{C}}

\newcommand{\IF}{\mathbb{F}}

\newcommand{\IN}{\mathbb{N}}

\newcommand{\IQ}{\mathbb{Q}}
\newcommand{\IR}{\mathbb{R}}

\newcommand{\IZ}{\mathbb{Z}}

\newcommand{\sB}{\mathcal{B}}
\newcommand{\sC}{\mathcal{C}}

\newcommand{\sL}{\mathcal{L}}

\newcommand{\sO}{\mathcal{O}}

\newcommand{\sU}{\mathcal{U}}

\newcommand{\sX}{\mathcal{X}}

\newcommand{\End}{\mathrm{End}}

\newcommand{\Hom}{\mathrm{Hom}}

\renewcommand{\deg}{\mathrm{deg}}

\newcommand{\Gal}{\mathrm{Gal}}

\renewcommand{\H}{\mathrm{H}}

\newcommand{\tensor}{\otimes}

\newcommand{\suchthat}{\;\ifnum\currentgrouptype=16 \middle\fi\vert\;}

\newcommand{\restr}[2]{{\left.\kern-\nulldelimiterspace#1\vphantom{\big|}\right|_{#2}}}

\newcommand{\into}{\hookrightarrow}

\def\d/{/\mspace{-6.0mu}/}

\newcommand{\wt}{\widetilde}

\newcommand{\crys}{\mathrm{crys}}

\DeclareTextCommand{\polhk}{T1}{\k}

\makeatletter
\def\paragraph{\@startsection{paragraph}{4}%
  \z@\z@{-\fontdimen2\font}%
  {\normalfont\bfseries}}

\def\subparagraph{\@startsection{subparagraph}{5}%
  \z@\z@{-\fontdimen2\font}%
  {\normalfont\bfseries}}
\makeatother

\newcommand{\Zar}{\mathrm{zar}}

\title{Faltings' Isogeny Theorem via Equidistribution}
\author{Junyi Xie}

\address{Beijing International Center for Mathematical Research, Peking University, Beijing 100871, China}

\email{xiejunyi@bicmr.pku.edu.cn}

\author{Ziquan Yang}

\address{Department of Mathematics \& Institute of Mathematical Sciences, Chinese University of Hong Kong, Shatin, N.T., Hong Kong SAR, China}

\email{zqyang@math.cuhk.edu.hk}

\begin{document}

\begin{abstract}
     We give a new proof of Faltings' isogeny theorem. More precisely, we show that Yuan's non-archimedean equidistribution theorem can be used to ``pump'' homomorphisms and semisimplicity from finite fields to number fields, thereby reducing Faltings' theorem directly to Tate's theorem. 
    In the appendix joint with Yap, we explain how an ultraproduct variant of our method gives another proof of the Shafarevich conjecture, and hence a new route to the Mordell conjecture via Parshin's trick. 
\looseness=-1
\end{abstract}

\maketitle

\vspace{-1em}
\section{Introduction}

In \cite{SUZ97}, Szpiro, Ullmo and Zhang proved that for a generic sequence of torsion points $\{ x_n \}$ on an abelian variety $A$ over a number field $K$, the Galois orbits of $x_n$ become equidistributed as $n \to \infty$. Let us draw a picture to illustrate their theorem:

\begin{center}
    \begin{tikzpicture}[
    scale=1,
    every node/.style={font=\small}
]

\def\s{2.2}      
\def\gapa{1.2}   
\def\gapb{1.5}   
\def\gapc{2.2}   

\begin{scope}[shift={(0,0)}]
    \draw[thick] (0,0) rectangle (\s,\s);

    \foreach \x/\y in {
        0.45/0.55,
        1.55/0.70,
        0.75/1.55,
        1.70/1.65
    }{
        \fill (\x,\y) circle (1.4pt);
    }

    \node[below] at (\s/2,-0.18) {$\operatorname{Gal}_K\cdot x_1$};
\end{scope}

\begin{scope}[shift={(\s+\gapa,0)}]
    \draw[thick] (0,0) rectangle (\s,\s);

    \foreach \x/\y in {
        0.25/0.40,
        0.62/0.28,
        1.18/0.36,
        1.82/0.30,
        0.42/0.92,
        0.88/0.78,
        1.42/0.98,
        1.86/0.82,
        0.22/1.48,
        0.70/1.32,
        1.14/1.58,
        1.66/1.36,
        0.52/1.86,
        1.48/1.90
    }{
        \fill (\x,\y) circle (1.15pt);
    }

    \node[below] at (\s/2,-0.18) {$\operatorname{Gal}_K\cdot x_2$};
\end{scope}

\node at ({2*\s+\gapa+0.65}, {0.5*\s}) {\Large $\cdots$};

\begin{scope}[shift={(2*\s+\gapa+\gapb,0)}]
    \draw[thick] (0,0) rectangle (\s,\s);

    \foreach \x/\y in {
        0.12/0.18, 0.36/0.32, 0.61/0.20, 0.88/0.27, 1.16/0.16, 1.42/0.33, 1.73/0.24, 1.98/0.30,
        0.22/0.55, 0.47/0.67, 0.73/0.49, 1.01/0.61, 1.28/0.52, 1.55/0.70, 1.80/0.57, 2.01/0.66,
        0.15/0.90, 0.40/1.02, 0.66/0.84, 0.92/0.96, 1.18/0.87, 1.46/1.05, 1.72/0.92, 1.96/1.00,
        0.28/1.27, 0.52/1.40, 0.79/1.21, 1.05/1.34, 1.31/1.18, 1.59/1.42, 1.83/1.24, 2.00/1.36,
        0.10/1.63, 0.37/1.78, 0.63/1.58, 0.90/1.72, 1.15/1.61, 1.42/1.82, 1.68/1.66, 1.94/1.76,
        0.24/2.00, 0.58/1.97, 0.84/2.05, 1.10/1.92, 1.36/2.04, 1.62/1.95, 1.87/2.08,
        0.80/0.11, 1.67/0.14, 0.05/1.18, 2.08/1.55, 1.92/1.88, 0.93/2.11
    }{
        \fill[black!70] (\x,\y) circle (0.75pt);
    }

    \node[below] at (\s/2,-0.18) {$\operatorname{Gal}_K\cdot x_n$};
\end{scope}

\node at ({3*\s+\gapa+\gapb+0.9}, {0.5*\s}) {\Large $\cdots$};

\draw[->, thick]
    ({3*\s+\gapa+\gapb+\gapc-0.55}, {0.5*\s})
    --
    ({3*\s+\gapa+\gapb+\gapc+0.85}, {0.5*\s})
    node[midway, above] {$n\to\infty$};

\begin{scope}[shift={(3*\s+\gapa+\gapb+\gapc+1.25,0)}]
    \fill[gray!25] (0,0) rectangle (\s,\s);
    \draw[thick] (0,0) rectangle (\s,\s);

    \node[below] at (\s/2,-0.18) {};
\end{scope}

\end{tikzpicture}
\end{center}

In the main text, we demonstrate how to go from the picture above (or rather its non-archimedean version due to Yuan \cite{Yuan08}) to Faltings' isogeny theorem \cite{Fal83} for abelian varieties over number fields via a quick and elementary argument, assuming the corresponding theorem over finite fields due to Tate \cite{Tat66}. In the appendix, it is explained how to go from this picture all the way to the Mordell conjecture via an ultraproduct variant of the method. It is interesting to note that, while equidistribution a priori governs small points, and the Mordell conjecture concerns rational points with no a priori height bound, there is nevertheless a highway from equidistribution to Mordell.

Let us recall Faltings' isogeny theorem below.

\begin{theorem}
\emph{(\cite[Satz 3, 4]{Fal83})}
\label{thm: main}
    Let $A$ be an abelian variety over a number field $K$, and $\ell$ be any prime. 
    \begin{enumerate}[label=\upshape{(\alph*)}]
        \item The natural map $\End_K(A) \tensor \IZ_\ell \to \End_{\Gal_K} (T_\ell A)$ is an isomorphism.
        \item The action of $\Gal_K$ on $V_\ell A$ is semisimple. 
    \end{enumerate}
\end{theorem}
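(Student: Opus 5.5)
We follow the strategy announced in the introduction: use Yuan's non-archimedean equidistribution theorem \cite{Yuan08} to transport homomorphisms and semisimplicity from the reductions $A_v$ over finite residue fields, where Tate's theorem \cite{Tat66} applies, to $A$ itself.

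\textbf{Reduction to the Tate conjecture with finite coefficients.} By the standard Tate-style argument, both (a) and (b) follow from the following statement: for every $\Gal_K$-stable subspace $W \subseteq V_\ell A$ (or, for (a), for graphs $W \subseteq V_\ell(A\times A)$ of Galois-equivariant endomorphisms), there is $u \in \End_K(A)\tensor\IQ_\ell$ with $u(V_\ell A)=W$. Equivalently, one needs: for each such $W$ and each $n$, the finite subgroup $G_n := (W\cap T_\ell A + \ell^n T_\ell A)/\ell^n T_\ell A \subseteq A[\ell^n]$, which is $\Gal_K$-stable, yields an isogeny $A \to A/G_n$. The point is then to show that the quotients $B_n = A/G_n$ fall into finitely many $K$-isomorphism classes. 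Two indices $n<m$ with $B_n\cong B_m$ then give endomorphisms of $A$ whose images approximate $W$, and Tate's $\ell$-adic compactness argument finishes.

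\textbf{Where equidistribution enters.} Instead of Faltings' height bound, we bound the $B_n$ as follows. Fix a place $v$ of good reduction prime to $\ell$. Over the finite field $k_v$, Tate's theorem gives $\End(A_v)\tensor\IZ_\ell \cong \End_{\mathrm{Frob}_v}(T_\ell A_v)$ together with semisimplicity. We would apply Yuan's theorem to the Galois orbits of generic torsion points $x_n$ in $G_n$ (or in a coset), on $A$ and on $A\times A$. At a place of good reduction the canonical measure on the Berkovich space is a Dirac mass at the Gauss point, so the equidistribution statement becomes a condition on Zariski closures: a Galois-stable subgroup of torsion whose orbits equidistribute must have small Zariski closure constrained by the reduction. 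Concretely, we aim to show that the Zariski closure in $A$ (or $A\times A$) of $\bigcup_n G_n$, or of the graphs, is an abelian subvariety whose Tate module realizes $W$. For (a), with $W$ the graph of $\phi\in\End_{\Gal_K}(T_\ell A)$, this produces an abelian subvariety $Z \subseteq A\times A$ with $V_\ell Z = W$, hence a genuine endomorphism. For (b), a complement of a stable subspace is produced in the same way.

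\textbf{Main obstacle.} The hard step is proving that this Zariski closure $Z$ is as small as the $\ell$-adic data predicts, i.e.\ $V_\ell Z = W$ rather than something larger. Our plan is to compare $Z$ with its reductions. Tate's theorem at each $v$ realizes $W$ as the image of some $u_v\in\End(A_v)\tensor\IZ_\ell$. Hence the reduction of the torsion in $W$ lies in a proper abelian subvariety of $A_v$ whenever $W\neq V_\ell A$. We would then use Yuan's theorem, in the form of Bogomolov/Manin--Mumford-type consequences for small points, applied to points that are generic in $Z$. Equidistribution at $v$ forces their reductions to become dense in $Z_v$, which contradicts their lying in the smaller subvariety unless $\dim Z$ matches $\dim W$. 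Making this density-of-reductions argument rigorous, and in particular passing from a single place to the required uniformity in $n$, is where we expect most of the work to lie.
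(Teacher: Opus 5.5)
\textbf{There is a genuine gap, and it is at the step you flag as the main obstacle.}

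You assert that Tate's theorem at $v$ forces the reduction of the torsion in a $\Gal_K$-stable $W\subsetneq V_\ell A$ into a proper abelian subvariety of $A_v$. This is false. So is your goal of producing, for every $\phi\in\End_{\Gal_K}(T_\ell A)$, an abelian subvariety $Z\subseteq A\times A$ with $V_\ell Z=\Gamma_\phi$.

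Tate's theorem only gives $W=u_v(V_\ell A)$ with $u_v\in\End(A_v)\tensor\IQ_\ell$. That is, $u_v$ is an $\ell$-adic combination $\sum_i c_i g_i$ of honest endomorphisms, and when the $c_i$ are $\IQ$-linearly independent its image is not algebraic. Concretely, take $\phi=c\cdot\mathrm{id}$ with $c\in\IZ_\ell\smallsetminus\IQ$. It is Galois-equivariant, and even lies in $\End_K(A)\tensor\IZ_\ell$. Yet by the argument of \cref{lem: irrationality}, the points $(x,cx)$ with $x\in A[\ell^\infty]$ are Zariski dense in $A\times A$ for $A$ simple. The same argument shows their reductions are dense in $A_v\times A_v$.

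So Zariski closures of torsion cannot realize an arbitrary Tate class. Part (a) only says that the \emph{space} of Tate classes is $\IQ_\ell$-spanned by algebraic ones, and your density-of-reductions step has nothing to contradict.

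There is a separate problem with your first paragraph. It is Tate's route, and it needs the $B_n=A/G_n$ to fall into finitely many $K$-isomorphism classes. Nothing in your equidistribution paragraph bounds the $B_n$; you switch strategies midway. In the paper, that finiteness is deduced only afterwards, in the appendix, from (a), (b) and their residual analogues.

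\textbf{The missing idea is the paper's rationality reformulation together with the diagonal trick.} By Tate's theorem, $\Hom_{\Gal_K}(V_\ell A,V_\ell B)$ sits inside $\Hom_k(A_0,B_0)\tensor\IQ_\ell$, and (a) amounts to saying it is a rational subspace. If it is not, \cref{lem: find basis} produces an element $\sum_{i=1}^m c_ig_i$ in it, where the $g_i$ do not lift to $K$ and the $c_i\in\IZ_\ell$ are linearly independent over $\IQ$.

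One then passes to $A^m\times B$. The Galois-stable, $\ell$-divisible set $\{(c_1x,\dots,c_mx,\sum_i g_i(c_ix))\}$ reduces into the graph of the \emph{honest} homomorphism $G_0=\sum_i g_i$ on $A_0^m$. Hence \cref{prop: tube absorbance}, which is the rigorous form of your density of reductions, puts the reduction of its Zariski closure $C$ inside $\Gamma_{G_0}$. Meanwhile \cref{lem: irrationality} makes $C\to A^m$ surjective. A dimension count, plus the closed-immersion result of \cite{BLR90} (this is why $v$ is taken unramified with $p>2$), forces $C$ to be a graph lifting $G_0$, a contradiction. A single place suffices, and no uniformity in $n$ is needed.

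For (b), ``a complement is produced in the same way'' is not available either. The paper first uses (a) to identify $\End_{\Gal_K}(V_\ell)$ with $D_\ell$. It then uses Frobenius semisimplicity over $k$ to rewrite semisimplicity as $L_{B_\ell}(W)=B_\ell\cdot L_{D_\ell}(W)$ (\cref{cor: relative position}). Finally it proves this identity by a noncommutative diagonal trick, built from a minimal tensor expression over the division algebra $D$ (\cref{lem: minimal tensor expression}).
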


By applying (a) to a product, it is easy to see that (a) is equivalent to saying that when $A, B$ are abelian varieties over $K$, then $\Hom_K(A, B) \tensor \IZ_\ell \to \Hom_{\Gal_K} (T_\ell A, T_\ell B)$ is an isomorphism. We shall therefore refer to part (a) as the ``Hom theorem'', and we refer to part (b) as the ``semisimplicity theorem''. A consequence of \cref{thm: main} is the following: 

\begin{corollary}
    \emph{(\cite[Korollar~2]{Fal83})}\label{cor: isogenous}
    Two abelian varieties over a number field $K$ are isogenous over $K$ if and only if they have the same L-factors at almost all places of $K$. 
\end{corollary}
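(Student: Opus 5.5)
The plan is to deduce \cref{cor: isogenous} from \cref{thm: main} in the standard way, using Chebotarev density and the Brauer--Nesbitt theorem. Fix a prime $\ell$. For a place $v$ of good reduction for $A$ not dividing $\ell$, the L-factor of $A$ at $v$ is determined by the characteristic polynomial of $\mathrm{Frob}_v$ acting on $V_\ell A$, and conversely; this is independent of $\ell$ by the Weil conjectures for abelian varieties over finite fields. So the statement reduces to the equivalence: $A \sim B$ over $K$ if and only if $V_\ell A \cong V_\ell B$ as $\Gal_K$-representations.

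For the ``only if'' direction, an isogeny $f\colon A \to B$ induces an injective map $T_\ell A \to T_\ell B$ with finite cokernel. It is therefore a $\Gal_K$-equivariant isomorphism $V_\ell A \cong V_\ell B$, and the characteristic polynomials of Frobenius, hence the L-factors, agree at all good places. At bad places the L-factors agree because isogenous varieties have isomorphic $V_\ell$ as representations of the decomposition group, and the L-factor is read off from the inertia invariants.

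For the ``if'' direction, suppose the L-factors agree outside a finite set $S$, which we enlarge to contain the places above $\ell$ and the bad places. Then $\mathrm{tr}(\mathrm{Frob}_v \mid V_\ell A) = \mathrm{tr}(\mathrm{Frob}_v \mid V_\ell B)$ for all $v \notin S$. Both representations factor through $\Gal(K_S/K)$. By Chebotarev the Frobenius elements are dense there, and traces are continuous, so the two characters agree on all of $\Gal_K$. By \cref{thm: main}(b) both representations are semisimple, and they are in characteristic $0$. By Brauer--Nesbitt they are therefore isomorphic: $V_\ell A \cong V_\ell B$.

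Finally, choose a $\Gal_K$-isomorphism $\phi\colon V_\ell A \to V_\ell B$. By \cref{thm: main}(a) in its Hom form, $\Hom_K(A,B)\tensor \IQ_\ell \cong \Hom_{\Gal_K}(V_\ell A, V_\ell B)$. The main point is to produce an honest isogeny rather than an $\ell$-adic combination. The determinant $\det$ on $\Hom(V_\ell A, V_\ell B)$, computed with respect to fixed bases, restricts to a polynomial function on the finite free $\IZ$-module $\Hom_K(A,B)$. It is nonzero on $\Hom_K(A,B)\tensor\IQ_\ell$ since $\phi$ lies there. A polynomial over $\IQ$ that vanishes on the lattice $\Hom_K(A,B)$ vanishes identically after base change to $\IQ_\ell$, because the lattice is Zariski dense in its span. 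Hence some $f \in \Hom_K(A,B)$ has $\det V_\ell(f) \neq 0$. Then $f$ has finite kernel, and since $\dim A = \dim B$ (the L-factors have the same degree), $f$ is an isogeny.
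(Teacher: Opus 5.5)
Your proposal is correct and follows the same route as the paper, which calls \cref{cor: isogenous} a formal consequence of \cref{thm: main} via Chebotarev density and Brauer--Nesbitt; you also spell out the step from a $\Gal_K$-isomorphism $V_\ell A\cong V_\ell B$ to an actual isogeny, which the paper leaves implicit. One harmless imprecision: in coordinates with respect to a $\IZ$-basis $f_1,\dots,f_r$ of $\Hom_K(A,B)$, the polynomial $\det\bigl(\sum_i x_i V_\ell(f_i)\bigr)$ has coefficients in $\IZ_\ell$ rather than $\IQ$, but this does not matter, since a polynomial over $\IQ_\ell$ that vanishes on $\IZ^r$ is already zero.
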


One may choose to refer to either \cref{thm: main} or \cref{cor: isogenous} as the ``Faltings isogeny theorem'', and the latter is a formal consequence of the former via Chebotarev density and Brauer--Nesbitt. In this note, we focus on \cref{thm: main}, which is a special case of the Tate conjecture.


\medskip

\paragraph{Notation and conventions}
For an abelian variety $A$ over a field $\kappa$ and a prime $\ell \neq \mathrm{char}\,\kappa$, we write $T_\ell A := \varprojlim_n A(\bar{\kappa})[\ell^n]$ for the Tate module of $A$, and $V_\ell A := T_\ell A \tensor_{\IZ_\ell} \IQ_\ell$. We shall write $A(\bar{\kappa})[\ell^\infty]$ simply as $A[\ell^\infty]$. There is a canonical identification $A[\ell^\infty] = V_\ell A / T_\ell A$. Thus, if $B$ is another abelian variety over $\kappa$, any $\IZ_\ell$-linear homomorphism $\psi : T_\ell A \to T_\ell B$ extends uniquely to a $\IQ_\ell$-linear homomorphism $V_\ell A \to V_\ell B$, and hence induces a homomorphism $\psi : A[\ell^\infty] \to B[\ell^\infty]$. We shall use the same notation $\psi$ for all three maps without further comment.

An observation we shall repeatedly use throughout the paper is the following. 
\begin{lemma}
\label{lem: ab sub}
    In the notation above, let $H$ be a $\Gal_\kappa$-invariant and $\ell$-divisible subgroup of $A[\ell^\infty]$. Then the Zariski closure $\overline{H}^\Zar$ is an abelian subvariety. 
\end{lemma}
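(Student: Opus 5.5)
Let $X := \overline{H}^\Zar \subseteq A_{\bar\kappa}$ denote the Zariski closure, taken in $A_{\bar\kappa}$. The plan is to show in turn that $X$ is a group subvariety, that it is defined over $\kappa$, that it is connected, and hence that it is an abelian subvariety.

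\emph{Step 1: $X$ is a closed algebraic subgroup.} Since $H$ is a subgroup, we have $H + H \subseteq H$ and $-H = H$. The group law and inversion are continuous for the Zariski topology. Hence for each $h \in H$, the translate $h + X$ is the closure of $h + H = H$, so $h + X = X$. It follows that $X + h \subseteq X$ for all $h \in H$. Fixing $x \in X$, the closed set $\{y : x + y \in X\}$ contains $H$, hence contains $X$. Therefore $X + X \subseteq X$, and similarly $-X = X$. So $X$ is a closed subgroup scheme of $A_{\bar\kappa}$, taken with its reduced structure.

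\emph{Step 2: descent to $\kappa$.} Since $H$ is $\Gal_\kappa$-stable and Galois acts on $A(\bar\kappa)$ by Zariski homeomorphisms, $X$ is $\Gal_\kappa$-stable. A Galois-stable reduced closed subscheme descends to a closed subscheme of $A_{\kappa^{\mathrm{sep}}}$. Over a perfect field this is enough. Over an imperfect field we only obtain $X$ as a reduced subgroup over $\kappa^{\mathrm{sep}}$ descended to $\kappa$, possibly after taking the reduced structure. I expect the problem to disappear once we know $X$ is an abelian variety: its points are dense in $\ell$-power torsion, which is étale since $\ell \neq \mathrm{char}\,\kappa$. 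Concretely, $X$ is the closure of a set of separable points, so it is defined over $\kappa^{\mathrm{sep}}$, and Galois descent from $\kappa^{\mathrm{sep}}$ applies.

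\emph{Step 3: connectedness.} This is the main point. Let $X^0$ be the identity component, so that $X/X^0$ is a finite group of some order $m$. Take $h \in H$. By $\ell$-divisibility there is $h' \in H$ with $m h' = h$. The image of $h'$ in $X/X^0$ is killed by $m$, so $h = m h' \in X^0$. Thus $H \subseteq X^0$, and since $X^0$ is closed, $X = X^0$ is connected. A connected, reduced, closed subgroup of an abelian variety over an algebraically closed field is smooth and proper, hence an abelian subvariety. This holds over $\bar\kappa$, and by Step 2 $X$ is defined over $\kappa$, which gives the claim.

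The only subtle part is the field-of-definition issue in Step 2 when $\kappa$ is imperfect. I would handle it as described there: the $\kappa^{\mathrm{sep}}$-points, namely $H$, are dense, and one applies Galois descent along $\kappa^{\mathrm{sep}}/\kappa$.
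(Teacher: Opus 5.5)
Your proof is correct and is the paper's argument. The closure is a closed subgroup, and it is connected because $H$ maps onto the finite component group, which is therefore $\ell$-primary and $\ell$-divisible, hence trivial. Your $m$-divisibility step tacitly uses that multiplication by integers prime to $\ell$ is bijective on $H$; equivalently, $|X/X^0|$ is a power of $\ell$ because $H$ meets every component.

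Your Step~2 goes beyond the paper, which takes the field of definition for granted and only applies the lemma over number fields. If you want the imperfect case, two points need fixing. Galois stability alone only shows that $X$ is defined over the perfect closure of $\kappa$, not over $\kappa^{\mathrm{sep}}$. The claim that the closure of the separable points $H$ is defined over $\kappa^{\mathrm{sep}}$ needs an actual argument, e.g.\ that a reduced finite-type $\kappa^{\mathrm{sep}}$-scheme with dense rational points is generically smooth, hence geometrically reduced.
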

\begin{proof}
    Clearly $\overline{H}^{\Zar}$ is closed under the group structure. Connectedness follows from the fact that the component group is both $\ell$-divisible and $\ell$-primary, and hence is trivial. 
\end{proof}

\section{Conceptual remarks}

Since the point of this paper is not the truth of the main theorem, which Faltings already proved in the 1980s, we take some time to explain in detail what is conceptually new in our approach. Depending on the reader's taste, this section may be read either before or after the proofs. 

The most naive idea of proving the algebraicity part of the Tate conjecture involves two steps: 

\begin{enumerate}[label=(\Roman*)]
    \item Find a way of constructing lots of algebraic cycles. 
    \item Argue that every Tate class is given by an $\ell$-adic linear combination of algebraic cycles from (I). 
\end{enumerate}
We will show how to quite literally carry out (I) and (II) for homomorphisms between abelian varieties over number fields. We remark that Faltings' strategy is more akin to developing, over number fields, a parallel story to Tate's (further developed by Zarhin), whereas our strategy uses equidistribution to directly \textbf{``pump''} homomorphisms and semisimplicity from a finite field to a number field. This is the most important difference in spirit between our approach and Faltings'. The pumping mechanism is very transparent for homomorphisms, and is more delicate for semisimplicity (see \cref{rmk: pump semisimplicity}). \\

\paragraph{Construction of cycles} First, how does equidistribution help with ``constructing'' algebraic cycles? The mechanism is quite simple. Suppose that you have infinitely many points on an irreducible variety. Then you take their Zariski closure. If the Zariski closure is not the whole variety, then it gives you a nontrivial algebraic cycle.

Equidistribution often gives a statement of the following type: if a sequence of small points is Galois-stable, then it should be equidistributed on its Zariski closure. This puts a strong constraint on what the Zariski closure can be. Over $\IC$, under suitable conditions, this can force the Zariski closure into the Euclidean closure (i.e., the closure in the classical analytic topology); over a $p$-adic field, this can mean forcing the Zariski closure into a $p$-adic tube, as we shall see. \\

\paragraph{Non-archimedean rationality} The Tate conjecture is usually viewed as an analogue of the Hodge conjecture. Both conjectures are hard because our methods for producing algebraic cycles are quite limited, but the Tate conjecture has an additional difficulty: unlike the Hodge conjecture, one cannot take a Tate class and try to show directly that it is given by an actual algebraic cycle\footnote{By which we mean an element of a Chow group with rational coefficients.}, because by purely linear-algebraic means it is not possible to distinguish between classes directly given by algebraic cycles and those which are merely $\ell$-adic, and possibly irrational, linear combinations of algebraic classes.

This contrast is especially sharp in a situation where the corresponding Hodge-theoretic statement is almost trivial. For abelian varieties $A,B$ over $\IC$, the fact that
$$ \Hom(A,B)=\Hom_{\mathrm{Hdg}}(\H^1(B,\IZ),\H^1(A,\IZ)) $$
is immediate from a complex analytic point of view, but the corresponding Tate and Faltings theorems remain much more delicate.

Therefore, one needs a notion of ``rationality'' for a Tate class before proving that it literally comes from an algebraic cycle. For a variety over a number field $K$, every archimedean place $\sigma:K\into\IC$ gives a notion of rationality: we can say that a Tate class is rational if it comes from the Betti cohomology, with $\IQ$-coefficients, of the resulting complex manifold. This is the very idea behind the notion of absolute Hodge cycles. But one can also consider a notion of rationality given by a non-archimedean place. Namely, if $v$ is a place of good reduction, we can say that a Tate class is rational if, on the special fiber, it is given by an actual algebraic cycle.

With this notion of rationality, it is still hard to know whether a given Tate class is rational. However, for the Hom theorem, we manage to prove that the space of Tate classes as a whole is rational (i.e., admits a rational basis). This is good enough for the Tate conjecture. We achieve this again using equidistribution, via a ``diagonal trick''.  \\

\paragraph{Algebraicity versus semisimplicity} One reason that cycle conjectures are notoriously difficult is that their various components are often intertwined. One frequently encounters a situation in which proving A requires B, while the most natural proof of B in turn requires A. This is particularly true of the algebraicity and semisimplicity statements in the Tate conjecture. For example, Tate used semisimplicity of Frobenius (known to Weil) to prove the Hom theorem. On the other hand, Moonen showed that over a number field, the algebraicity part of the (strong) Tate conjecture implies the semisimplicity part (\cite{Moo19}).

There are two ways to deal with this circularity: either one proves the two statements simultaneously --- as in Faltings' original proof --- or one finds a way to break the loop. Our approach does the latter. More precisely, we first prove the Hom theorem and then deduce the semisimplicity theorem by a bootstrap argument. This is a bootstrap in two senses: the Hom theorem is an input for semisimplicity, and the main argument for semisimplicity is a noncommutative bootstrap of the diagonal trick. Note that this goes in the opposite direction to what Tate did. \\

\paragraph{Finiteness input} It is well known that the Tate conjecture should reflect the finiteness of certain objects in arithmetic geometry (cf. Totaro's beautiful survey \cite{Tot17}). This is clearly manifested in Tate's and Faltings' original approaches to their theorems. In particular, Faltings proved his isogeny theorem first by proving a finiteness property of polarized abelian varieties of bounded height. 

By contrast, our argument does not go through such finiteness statements over number fields. Of course, since we invoke Tate's theorem, finiteness still enters the argument. But the finiteness used by Tate is of a very elementary kind. Namely, a moduli space of finite type has finitely many points over a finite field. Other than that, we do not use finiteness of certain arithmetic-geometric objects specific to number fields. Nonetheless, the equidistribution we use, just like finiteness statements, is a reflection of the ``smallness'' of number fields --- they need to have sufficiently rich Galois groups.

\section{An archimedean toy example}

In this section, we give an archimedean toy example of how to use equidistribution to ``construct'' algebraic cycles. 

\begin{theorem}
\label{thm: Junyi over C}
    Suppose that $A, B$ are abelian varieties over a number field $K \subseteq \IC$. 
    Let $\psi : \H^1(B(\IC), \IZ) \to \H^1(A(\IC), \IZ)$ be a morphism of lattices. Then $\psi$ is induced by an algebraic morphism $f : A \to B$ if and only if $\psi \tensor \IZ_\ell$ is $\Gal_K$-equivariant. 
\end{theorem}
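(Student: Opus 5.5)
The direction ``only if'' is immediate: an algebraic $f$ defined over $K$ induces a $\Gal_K$-equivariant map on Tate modules, compatible with the comparison $\H^1(A(\IC),\IZ)\tensor\IZ_\ell \cong \H^1_{\mathrm{et}}(A_{\bar K},\IZ_\ell) = (T_\ell A)^\vee$. For the converse, I plan to use the archimedean equidistribution theorem of Szpiro--Ullmo--Zhang to produce the graph of $f$ as a Zariski closure of torsion points. Dualizing, $\psi$ corresponds to a map of lattices $\psi^\vee : \H_1(A(\IC),\IZ)\to \H_1(B(\IC),\IZ)$. Tensoring with $\IR$ gives a real-linear map $\Lie A \to \Lie B$, and hence a continuous homomorphism of real tori $\phi : A(\IC)\to B(\IC)$. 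The goal is to show that $\phi$ is holomorphic and algebraic, and defined over $K$. (Note that merely knowing $\psi$ is a lattice map does not give holomorphicity; this must come from the Galois hypothesis.)

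Consider the graph $\Gamma_\phi \subseteq (A\times B)(\IC)$, a closed real subtorus. On torsion, $\phi$ restricted to $A[\ell^n]$ is the map induced by $\psi$ on $A[\ell^\infty] = V_\ell A/T_\ell A$. By hypothesis this map is $\Gal_K$-equivariant. Hence
\[ H := \{(x,\phi(x)) \suchthat x \in A[\ell^\infty]\} \]
is a $\Gal_K$-invariant, $\ell$-divisible subgroup of $(A\times B)[\ell^\infty]$. By \cref{lem: ab sub}, its Zariski closure $C := \overline{H}^{\Zar}$ is an abelian subvariety of $A\times B$ defined over $K$. It contains $H$, so it projects onto $A$ (its image contains the Zariski dense set $A[\ell^\infty]$). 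The key step is to show that $C(\IC)$ is contained in $\Gamma_\phi$, i.e.\ that the Zariski closure does not escape the real torus.

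For this I use equidistribution. Choose a generic sequence of points $x_n\in H$ in $C$, that is, no subsequence lies in a proper abelian subvariety (or torsion coset) of $C$; this is possible since $H$ is Zariski dense in $C$. By Szpiro--Ullmo--Zhang, the Galois orbits $\Gal_K\cdot x_n$ equidistribute in $C(\IC)$ with respect to the Haar measure of $C(\IC)$. Galois invariance of $H$ means these orbits lie in $H\subseteq \Gamma_\phi$, which is closed. So the limiting Haar measure, whose support is all of $C(\IC)$, is supported in $\Gamma_\phi$. Hence $C(\IC)\subseteq \Gamma_\phi$.

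Now $C$ is a complex torus surjecting onto $A$ and contained in the graph of a function. Therefore $C\to A$ is injective, hence an isomorphism of abelian varieties over $K$, and $C = \Gamma_\phi$. Then $f := \mathrm{pr}_B\circ(\mathrm{pr}_A|_C)^{-1}$ is an algebraic homomorphism over $K$ which agrees with $\phi$. It therefore induces $\psi$ on $\H^1$, since $\H_1$ is recovered from the lattice of the tangent map. I expect the main obstacle to be the careful application of equidistribution: ensuring the generic sequence exists inside $H$ and that the limit measure has full support on $C(\IC)$. Everything else is formal.
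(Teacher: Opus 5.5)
Your proposal is correct and matches the paper's proof: the same Galois-stable graph of $\psi$ on $A[\ell^\infty]$, \cref{lem: ab sub} to make its Zariski closure $C$ an abelian subvariety, and Szpiro--Ullmo--Zhang equidistribution to force $C(\IC)\subseteq\Gamma_\phi$. The one difference is how you get equality. The paper obtains $\Gamma_\phi\subseteq C(\IC)$ directly, since $\Gamma_\phi$ is the Euclidean closure of the torsion graph. You instead use that $C\to A$ is surjective and, because $C(\IC)$ lies in a graph, injective on points, hence an isomorphism. Either works.
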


\begin{proof}
    Suppose that $A = \IC^a / \Lambda_A$ and $B = \IC^b / \Lambda_B$. By dualizing $\psi$, we can reinterpret $\psi$ as a map between lattices $\Lambda_A \to \Lambda_B$. 
    Then $ \psi \tensor_\IZ \IR$ gives rise to a map $A(\IC) \to B(\IC)$ as real tori. Let us denote it by $f_\IC$. 
    Choose an embedding $\sigma : \bar{K} \into \IC$ and omit it from the notation. 
    Choose some prime $\ell$ and consider $\psi : A[\ell^\infty] \to B[\ell^\infty]$. 
    Consider the graph 
    $$ \Gamma_{\psi} = \{ (x, \psi(x)) : x \in A[\ell^\infty] \} \subseteq A(\bar{K}) \times B(\bar{K}).  $$
    Now we take the Zariski closure $C = \overline{\Gamma}^\Zar_{\psi}$ in $A \times B$, which is an abelian subvariety by \cref{lem: ab sub}.  
    
    It suffices to show $C(\IC) = \Gamma_{f_\IC}$. Note also that $\Gamma_{f_\IC}$ is the closure of $\Gamma_\psi$ in the classical (Euclidean) topology, so $\Gamma_{f_\IC} \subseteq C(\IC)$. To see the reverse inclusion, we shall apply Szpiro-Ullmo-Zhang's equidistribution theorem. 
    Now let us choose a sequence $\{ y_n = (x_n, \psi(x_n)) \} \subseteq \Gamma_\psi$ such that no subsequence is contained in a torsion translate of proper abelian subvarieties of $C$. By \cite[Thm.~1.1]{SUZ97}, the Galois orbits of $y_n$'s are equidistributed on $C(\IC)$. On the other hand, as $\Gamma_{\psi}$ is $\Gal_K$-stable, the Galois orbits of $y_n$'s are all contained in $\Gamma_{f_\IC}$. This implies that $C(\IC) \subseteq \Gamma_{f_\IC}$. 
\end{proof}

\section{A non-archimedean analogue}
For readers' convenience we recall Yuan's generic equidistribution of small points \cite[Thm.~3.1]{Yuan08}. We will apply it to non-archimedean places. 

\begin{theorem}
    \emph{(Yuan)} Let $X$ be a projective variety of dimension $g$ over a number field $K$, and let $\overline{L} = (L, \| \cdot \|)$ be an adelic metrized line bundle such that $L$ is ample and the metric is semipositive. Let $\{ x_n \}$ be an infinite sequence of $\bar{K}$-points which is generic and small. 
    
    Then for any place $v$ of $K$, the Galois orbits of $\{ x_n \}$ are equidistributed in the analytic space $X_{\IC_v}^{\mathrm{an}}$ with respect to the measure $d \mu_v = c_1(\overline{L})^g_v / \deg_L(X)$. 
\end{theorem}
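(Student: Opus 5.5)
The plan is the variational method of Szpiro--Ullmo--Zhang, with Yuan's arithmetic bigness (Siu) inequality substituted for the positivity hypothesis at $v$. Throughout, ``small'' means $h_{\overline{L}}(x_n) \to h_{\overline{L}}(X) := \overline{L}^{g+1}/((g+1)\deg_L(X))$, and ``generic'' means that no subsequence lies in a proper Zariski closed subset. For a $\bar{K}$-point $x$, write $\mu_{x}$ for the probability measure on $X^{\mathrm{an}}_{\IC_v}$ given by the uniform average of Dirac masses over its Galois orbit. The goal is to show that $\int f\,d\mu_{x_n} \to \int f\,d\mu_v$ for every continuous function $f$ on $X^{\mathrm{an}}_{\IC_v}$.

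\textbf{Reductions.} By density (Stone--Weierstrass in the non-archimedean setting, via Gubler's density of model functions; smooth functions at archimedean $v$) and uniform boundedness of the probability measures, it suffices to treat $f$ in a dense class. For non-archimedean $v$ this class is the model functions, i.e.\ $f = \log\|1\|$ for a vertical $\IQ$-Cartier divisor on some integral model. Every such $f$ can be written, after scaling, as a difference of two functions each of which stays semipositive when added to a semipositive metric; this is an ampleness argument on the model. One may also reduce to $\overline{L}$ with model-type metrics, approximating a general semipositive adelic metric uniformly. Both sides of all the inequalities below are continuous under uniform convergence of metrics, so this approximation is harmless.

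\textbf{Variational step.} For $\epsilon > 0$, let $\overline{L}(\epsilon f)$ be $\overline{L}$ with the metric at $v$ multiplied by $e^{-\epsilon f}$. Directly from the definition of the height,
$$h_{\overline{L}(\epsilon f)}(x_n) = h_{\overline{L}}(x_n) + \epsilon\, c_v \int f\, d\mu_{x_n},$$
where $c_v$ is the local degree normalization. Genericity and Zhang's successive-minima inequality (the essential minimum dominates the normalized arithmetic volume) give
$$\liminf_n h_{\overline{L}(\epsilon f)}(x_n) \ \ge\ \frac{\widehat{\mathrm{vol}}(\overline{L}(\epsilon f))}{(g+1)\deg_L(X)}.$$
Now write $\overline{L}(\epsilon f) = \overline{M}_1 - \overline{M}_2$ with $\overline{M}_i$ nef, and apply Yuan's arithmetic Siu inequality $\widehat{\mathrm{vol}}(\overline{M}_1 - \overline{M}_2) \ge \overline{M}_1^{g+1} - (g+1)\,\overline{M}_1^{g}\cdot\overline{M}_2$. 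Expanding the intersection numbers gives
$$\widehat{\mathrm{vol}}(\overline{L}(\epsilon f)) \ \ge\ \overline{L}^{g+1} + (g+1)\,\epsilon\, c_v \int f\, c_1(\overline{L})_v^g + O(\epsilon^2).$$

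\textbf{Conclusion.} Combining with smallness,
$$\liminf_n \int f\, d\mu_{x_n} \ \ge\ \int f\, d\mu_v - O(\epsilon).$$
Let $\epsilon \to 0^+$, then replace $f$ by $-f$ to get the reverse inequality; this proves equidistribution.

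\textbf{Main obstacle.} The crux is the Siu-type lower bound for the arithmetic volume. Without it, one needs $\overline{L}(\epsilon f)$ itself to be semipositive so that arithmetic Hilbert--Samuel applies, which is exactly the restrictive positivity hypothesis in Szpiro--Ullmo--Zhang. To prove the Siu inequality I would first attack the arithmetic analogue of the classical proof: count small sections of $N\overline{M}_1 - N\overline{M}_2$ by restricting sections of $N\overline{M}_1$ to a small-section divisor of $N\overline{M}_2$, controlling norms via Gromov's inequality and arithmetic Hilbert--Samuel for the nef pieces.
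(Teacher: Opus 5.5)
The paper gives no proof of this statement. It is quoted from \cite[Thm.~3.1]{Yuan08} and used as a black box in \cref{prop: tube absorbance}. Your outline is essentially Yuan's own argument: the variational principle of \cite{SUZ97}, Gubler's density of model functions, and the arithmetic Siu inequality in place of ampleness of $\overline{L}(\epsilon f)$, with Siu proved by restricting to divisors of small sections using Gromov's inequality and arithmetic Hilbert--Samuel. So the architecture is right, but one step is false as written.

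The inequality $\liminf_n h_{\overline{L}(\epsilon f)}(x_n)\ge \widehat{\mathrm{vol}}(\overline{L}(\epsilon f))/((g+1)\deg_L(X))$ is not Zhang's theorem of successive minima. That theorem concerns semipositive bundles and bounds the essential minimum by $h_{\overline{L}}(X)$, whereas $\overline{L}(\epsilon f)$ is not semipositive, which is the whole point. Worse, your inequality fails whenever the volume vanishes, since $\widehat{\mathrm{vol}}\ge 0$ while heights can be negative.

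This is exactly the situation the paper needs. For an abelian variety with the canonical metric, $\overline{L}^{g+1}=0$. Taking $f\equiv -1$, a generic sequence of torsion points has $h_{\overline{L}(\epsilon f)}(x_n)=-\epsilon c_v<0$, contradicting your bound.

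The repair is standard and has two parts.

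\emph{Normalize by a constant twist.} Replace $\overline{L}$ by $\overline{L}+\overline{\mathcal{O}}(c)$ with $c\gg 0$, i.e.\ the trivial bundle with a constant archimedean metric. This shifts every height, including $h_{\overline{L}}(X)$, by $c$. It preserves smallness, genericity and $c_1(\overline{L})_v^g$. It makes $\overline{L}$, and by the same trick your two pieces of $\mathcal{O}(f)$, arithmetically nef as Siu requires. It also makes $\overline{L}^{g+1}>0$, so your Siu bound gives $\widehat{\mathrm{vol}}(\overline{L}(\epsilon f))>0$ for small $\epsilon$.

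\emph{Prove the volume bound only for positive volume.} Take $\overline{M}$ with positive volume and underlying bundle $L$. Two points of $\Lambda\cap B$ congruent modulo $m$ differ by an element of $m(\Lambda\cap \tfrac{2}{m}B)$. Counting lattice points this way gives, in your normalization, $\widehat{\mathrm{vol}}(\overline{M}(-t))\ge \widehat{\mathrm{vol}}(\overline{M})-(g+1)t\deg_L(X)$ for $t\ge 0$. Hence for $0\le t<\widehat{\mathrm{vol}}(\overline{M})/((g+1)\deg_L(X))$, some $N\overline{M}(-t)$ has a nonzero small section $s$. Then $h_{\overline{M}}(x)\ge t$ for $x\notin \mathrm{div}(s)$, and genericity gives $\liminf_n h_{\overline{M}}(x_n)\ge t$.

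With these two additions your conclusion step goes through unchanged.
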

Recall that in \textit{loc. cit.}, a sequence is called ``generic'' if it has no subsequence contained in a proper closed subvariety, and "small" if $h_{\overline{L}}(x_n) \to h_{\overline{L}}(X)$.

\begin{proposition}
\label{prop: tube absorbance}
Let $X$ be an abelian variety over $K$, and let $v$ be a finite place such that $X$ has good reduction $\sX$ over $\sO_v$. Let $k = k(v)$ be the residue field and assume $\mathrm{char\,} k \neq \ell$. Let $X_0 = \sX_{k}$ be its special fiber and $Z_0 \subseteq X_0$ be a closed subvariety. Let $H \subseteq X[\ell^\infty]$ be an $\ell$-divisible, $\Gal_K$-stable abelian subgroup. Then if $H$ is contained in the tube $]Z_0[$ defined by $Z_0$, so is $\overline{H}^\Zar$. 
\end{proposition}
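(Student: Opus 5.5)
We mimic the proof of \cref{thm: Junyi over C}, replacing Szpiro--Ullmo--Zhang by Yuan's theorem at the finite place $v$.

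\textbf{Setup.} By \cref{lem: ab sub}, $Y := \overline{H}^\Zar$ is an abelian subvariety of $X$, defined over $K$ because $H$ is $\Gal_K$-stable. We equip $Y$ with a symmetric ample line bundle $L$ and its canonical (N\'eron--Tate) adelic metric. This metric is semipositive, $h_{\overline{L}}(Y)=0$, and every torsion point has height $0$, so every sequence in $H$ is small. At $v$, $Y$ has good reduction $\mathcal{Y}$: the closure of $Y$ in $\sX$ is an abelian subscheme by the N\'eron mapping property. Hence the canonical metric is the model metric, and the equilibrium measure $\mu_v = c_1(\overline{L})^g_v/\deg_L(Y)$ is the Dirac mass at the Shilov point $\xi$ of $Y^{\mathrm{an}}_{\IC_v}$, i.e.\ the unique preimage of the generic point of $Y_0 := \mathcal{Y}_k$ under reduction. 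This is the standard computation (Chambert-Loir) for model metrics with irreducible special fiber.

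\textbf{Genericity.} Next we choose a generic sequence $x_n \in H$. Since $H$ is countable and Zariski dense in $Y$, we enumerate the countably many proper closed subvarieties of $Y$ defined over $\bar K$ and pick $x_n$ by a diagonal argument. A cleaner route is to work only with torsion translates of proper abelian subvarieties and use Raynaud's theorem (Manin--Mumford), which reduces genericity to avoiding those countably many translates. Yuan's theorem then gives
\[
\frac{1}{\#O(x_n)}\sum_{z\in O(x_n)}\delta_z \to \delta_\xi
\]
weakly on $Y^{\mathrm{an}}_{\IC_v}$, where $O(x_n)$ denotes the Galois orbit of $x_n$.

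\textbf{Conclusion.} Since $H$ is $\Gal_K$-stable, every orbit lies in $]Z_0[ \cap Y^{\mathrm{an}} = \,]Z_0\cap Y_0[_{\mathcal Y}$, the tube in $Y^{\mathrm{an}}$. The tube $]W[$ of a closed subset $W$ of the special fiber is the preimage of $W$ under the anticontinuous reduction map, so it is closed in the Berkovich topology. Weak convergence of measures supported in a closed set gives a limit supported there, so $\xi \in \,]Z_0\cap Y_0[$. Therefore the generic point of $Y_0$ lies in $Z_0$, so $Y_0 \subseteq Z_0$. Every point of $Y(\IC_v)$ reduces into $Y_0 \subseteq Z_0$, hence $Y = \overline{H}^\Zar \subseteq \,]Z_0[$. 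The hypothesis $\mathrm{char}\,k\ne\ell$ does not seem to be essential here. It guarantees that reduction is injective on $H$, but that is not used in this argument; it only keeps $H$ away from the $p$-torsion subtleties.

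\textbf{Main obstacle.} The delicate step is identifying $\mu_v$ with $\delta_\xi$ and checking the topology: closedness of the tube, and the fact that weak limits respect supports in Berkovich space. If closedness is problematic, we would instead test against a continuous function. Take $\phi=\min(1, -\log|f|_v)$ type functions, where $f$ is a local equation cutting out $Z_0\cap Y_0$ on an affine chart of the model. Such a $\phi$ vanishes at $\xi$ if $Y_0\not\subseteq Z_0$, but is $\ge$ a positive constant on the tube only after shrinking; if needed, we would use a slightly larger closed tube $\{|f|\le r\}$ with $r<1$, which contains $]Z_0[$ and still excludes $\xi$.
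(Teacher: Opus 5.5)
\textbf{The gap.} Your argument breaks at the topological step in your Conclusion. The reduction map is anticontinuous: preimages of \emph{open} subsets of the special fiber are closed. Hence the tube $]W[$ of a closed subset $W$ is \emph{open} in the Berkovich space, not closed.

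Worse, the Shilov point $\xi$ lies in the closure of every nonempty tube. Already for the open unit disc, the type II points $\zeta_{0,r}$ converge to the Gauss point as $r\to 1$. So probability measures supported in $]Z_0\cap Y_0[$ can converge weakly to $\delta_\xi$, and the support argument gives nothing.

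Your remark that $\mathrm{char}\,k\neq\ell$ is inessential should have been a warning sign, because the statement is false for $\ell=p$. Suppose $E/K$ has CM by $\sO_F$ with $F\subseteq K$, and $p=\mathfrak p\bar{\mathfrak p}$ splits in $F$. Take a place $v\mid p$ of good reduction, enlarging $K$ if necessary. For the appropriate choice of $\mathfrak p$, the $\Gal_K$-stable $p$-divisible group $H=E[\mathfrak p^\infty]$ is the kernel of reduction. So $H\subseteq\,]\{0\}[$, while $\overline H^{\Zar}=E$. Compare $\mu_{p^\infty}\subseteq\mathbb{G}_m$, which sits in the residue disc of $1$ yet equidistributes to the Gauss point.

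Your fallback does not repair this. For $r<1$ the set $\{|f|\le r\}$ is \emph{contained in} $]Z_0[=\{|f|<1\}$, not the other way round. So there is no reason for the Galois orbits to lie in it, and $\min(1,-\log|f|_v)$ has no positive lower bound on the open tube.

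\textbf{The missing ingredient.} What is needed is a uniform gap, and this is exactly where $\ell\neq p$ enters in the paper. Assume $Y_0\not\subseteq Z_0$. Choose a global section $s$ of a power of an ample model bundle $\sL$ on $\sX$ that vanishes on $Z_0$ but not on $Y_0$. Using a global section rather than a local equation on an affine chart also makes the test function continuous on all of $X^{\mathrm{an}}_{\IC_v}$.

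By N\'eron--Ogg--Shafarevich, the points of $X[\ell^\infty]$ are defined over an extension unramified at $v$, so the values $\|s(x)\|_v$ lie in $|\pi|_v^{\IZ}$. Every Galois conjugate $x$ of a point of $H$ reduces into $Z_0$, hence satisfies $\|s(x)\|_v\le|\pi|_v=q_v^{-1}$. Then $f=\min\{-\log\|s\|_v,1\}$ is at least $\min\{\log q_v,1\}>0$ on every orbit, but $f(\xi)=0$. This contradicts Yuan's theorem.

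The rest of your setup matches the paper and is fine: the canonical metric equals the model metric at $v$, $\mu_v=\delta_\xi$, and a generic sequence can be chosen in $H$. One minor correction: when $e(\sO_v/\IZ_p)\ge p-1$, the closure of $Y$ in $\sX$ need not be an abelian subscheme. You should instead work with the N\'eron model of $Y$ and its map to $\sX$, as the paper does.
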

\begin{proof}
    Again write $C$ for the Zariski closure $\overline{H}^\Zar$. Then $C$ is an abelian subvariety. As $H$ is $\ell$-divisible, $C$ is connected. 
    One easily deduces from the N\'eron-Ogg-Shafarevich criterion that since $X$ has good reduction at $v$, so does $C$. Let $\sC$ be its extension over $\sO_v$ and write $C_0$ for its special fiber. Assume that $H \subseteq ]Z_0[$. Our goal is to show that $C_0 \to X_0$ factors through $Z_0$.  

    Suppose that the image of $C_0$ is not contained in $Z_0$. Let $L$ be a symmetric ample line bundle on $X$ such that its extension $\sL$ over $\sX$ remains ample. Endow $L$ with the canonical metric $\| \cdot \|$. Then at $v$, $\| \cdot \|_v$ is the same as the model metric given by the model $(\sX, \sL)$ (cf. \cite[\S2.3]{Zha95}). Up to replacing $\sL$ by a sufficiently high power, one can always find a global section $s \in \H^0(\sL)$ which vanishes on $Z_0$ but does not vanish on $C_0$. Consider the function $f : X^{\mathrm{an}}_{\IC_v} \to \IR$ defined by 
    $$ f(x) = \min \{ - \log \| s(x) \|_v, 1 \}. $$
    Here we take the normalization that $|\pi|_v = q_v^{-1}$, where $\pi$ is a uniformizer of $\sO_v$ and $q_v$ is the order of its residue field.

    Let $\{ x_n \} \subseteq H$ be a generic sequence of points on $C$ and consider the equation 
    \begin{equation}
    \label{eqn: Yuan for C}
         \lim_{n \to \infty} \left( \frac{1}{\# O(x_n)} \sum_{x\in O(x_n)} f(x) \right) = f(\xi_C)
    \end{equation}
    given by \cite[Thm.~3.1]{Yuan08}, where $\xi_C$ is the Shilov point of $C$, and $O(x_n) = \Gal_K \cdot x_n$ is the Galois orbit.  
    The fact that $f(\xi_C)$ is the integral of $f$ with respect to $\mu_v$ follows from \cite{CL06}. 
    As $s$ does not vanish on $C_0$, $f(\xi_C) = 0$. On the other hand, the $\ell$-primary torsion points of $X$ are defined over unramified extensions at $v$. Since $s$ vanishes on $Z_0$ and every Galois conjugate of $x_n$ reduces to $Z_0$, we have
    $\|s(x) \|_v \leq |\pi|_v = q_v^{-1} $ for every $x\in O(x_n)$. Therefore $f(x)\geq \min\{\log q_v, 1 \}>0$, and consequently the left hand side of (\ref{eqn: Yuan for C}) is strictly positive, a contradiction. 
\end{proof}

We can use the above to prove a non-archimedean analogue of \cref{thm: Junyi over C}.

\begin{theorem}
\label{thm: Junyi in char p}
    Let $A, B$ be abelian varieties over a number field $K$. Let $v$ be a place where both have good reduction and denote their special fibers by $A_0, B_0$. Let $k$ be the residue field and $p := \mathrm{char\,} k$. Assume that $e(\sO_v / \IZ_p) < p - 1$. 
    
    Let $\ell \neq p$ be another prime. Under the identifications $T_\ell A = T_\ell A_0$ and $T_\ell B = T_\ell B_0$, if $f_0 : A_0 \to B_0$ is a homomorphism on the special fiber such that $T_\ell(f_0) : T_\ell A \to T_\ell B$ is $\Gal_K$-equivariant, then $f_0$ lifts to a homomorphism $f : A \to B$. 
\end{theorem}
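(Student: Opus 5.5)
Let $\psi := T_\ell(f_0)$, viewed also as $\psi : A[\ell^\infty] \to B[\ell^\infty]$, and form the graph
$$ H := \Gamma_\psi = \{ (x, \psi(x)) : x \in A[\ell^\infty] \} \subseteq (A \times B)[\ell^\infty]. $$
Because $\psi$ is $\Gal_K$-equivariant, $H$ is $\Gal_K$-stable, and it is clearly $\ell$-divisible. By \cref{lem: ab sub}, its Zariski closure $C := \overline{H}^\Zar$ is an abelian subvariety of $X := A \times B$. Since $X$ has good reduction at $v$, so does $C$; write $\sC$ for its Néron model and $C_0$ for its special fiber. Set $Z_0 := \Gamma_{f_0} \subseteq X_0 = A_0 \times B_0$. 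The $\ell$-power torsion of $X$ is unramified at $v$, and reduction identifies $X[\ell^\infty]$ with $X_0[\ell^\infty]$. Under this identification, $(x, \psi(x))$ reduces to $(x_0, f_0(x_0))$, so $H \subseteq\, ]Z_0[$. \cref{prop: tube absorbance} then gives $C \subseteq\, ]Z_0[$, which (as in its proof) means that $C_0 \to X_0$ factors through $Z_0$.

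Next I want to show that the first projection $\mathrm{pr}_1 : C \to A$ is an isomorphism; then $f := \mathrm{pr}_2 \circ \mathrm{pr}_1^{-1}$ is a homomorphism $A \to B$. Surjectivity follows because $\mathrm{pr}_1(C) \supseteq A[\ell^\infty]$, which is Zariski dense in $A$. For injectivity, consider the composite $C_0 \to Z_0 \xrightarrow{\sim} A_0$. It is the special fiber of $\mathrm{pr}_1$ extended to Néron models, so $\dim C_0 \le \dim A_0$, and hence $\dim C = \dim A$ and $\mathrm{pr}_1$ is an isogeny. Its kernel $G$ is a finite group scheme over $K$ that extends to a finite flat subgroup scheme $\mathcal{G}$ of $\sC$, namely the kernel of the isogeny of abelian schemes $\sC \to \sA$. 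The special fiber of $\mathcal{G}$ is the kernel of $C_0 \to A_0$. Since $C_0 \to A_0$ factors as the closed immersion $C_0 \hookrightarrow Z_0$ followed by the isomorphism $Z_0 \cong A_0$, that kernel is trivial. So $\mathcal{G}$ is finite flat with trivial special fiber, which forces rank $1$, hence $G = 0$ and $\mathrm{pr}_1$ is an isomorphism.

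Finally, $f$ reduces to $f_0$: the reduction of $C = \Gamma_f$ is $C_0 \subseteq Z_0 = \Gamma_{f_0}$, and both are graphs of maps defined on all of $A_0$, so they coincide. Equivalently, $T_\ell f = \psi = T_\ell f_0$, and $\ell$-adic Tate modules detect homomorphisms.

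The main obstacle is the comparison of special fibers: identifying the special fiber of the Zariski closure of $C$ in $\sX$ with the Néron model $C_0$, and checking that "contained in the tube" yields a genuine closed immersion $C_0 \to Z_0$ rather than a merely set-theoretic or non-reduced statement. This is where the hypothesis $e(\sO_v/\IZ_p) < p-1$ should enter. By Raynaud's theorem, finite flat group schemes over $\sO_v$ are then determined by their generic fiber, and the closure of $C$ in $\sX$ is an abelian subscheme, a closed immersion on the special fiber as well. This rules out the possibility that $\mathrm{pr}_1$ is an isogeny with $p$-power kernel, such as a Frobenius-type kernel that is invisible on the generic side but infinitesimal on the special fiber. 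I would first try to carry out this kernel argument, applying Raynaud to $\mathcal{G}$ to deduce $\mathcal{G}$ étale, hence trivial since its special fiber is trivial. If that fails, I would fall back on comparing $p$-divisible groups via Raynaud/Fontaine full faithfulness.
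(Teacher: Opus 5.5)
Your proposal follows the paper's proof: take the Zariski closure $C$ of the graph of $\psi$ on $A[\ell^\infty]$, use \cref{prop: tube absorbance} to put $C_0$ inside $\Gamma_{f_0}$, use the fact that for $e(\sO_v/\IZ_p)<p-1$ the map of N\'eron models $\sC\to\sX$ (with $\sX$ the N\'eron model of $A\times B$) is still a closed immersion on special fibres (the paper cites \cite[\S7.5, Thm.~4]{BLR90}; it follows from Raynaud's theorem applied to $\sC[p^n]\to\sX[p^n]$), and finish with a dimension count forcing $C_0=\Gamma_{f_0}$, so that $C$ is a graph. Note, however, that this closed immersion is exactly what justifies both your bound $\dim C_0\le\dim A_0$ and the triviality of the special fibre of $\mathcal{G}$, after which no further input is needed; your suggested route of applying Raynaud to $\mathcal{G}$ to make it \'etale does not work as stated, since Raynaud's theorem does not make finite flat group schemes \'etale (e.g.\ $\mu_p$ over $\IZ_p$), and that route already presupposes the triviality of the special fibre of $\mathcal{G}$.
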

\begin{proof}
    On $A \times B$, consider $H = \{ (x, f_0(x)) : x \in A[\ell^\infty] \} \subseteq (A \times B)(\bar{K})$. 
    As before $C := \overline{H}^{\Zar}$ is an abelian subvariety with good reduction. Let $C_0$ be its reduction. By \cref{prop: tube absorbance}, the reduction of the inclusion $C \into A \times B$ is a morphism $C_0 \to A_0 \times B_0$ which factors through $\Gamma_{f_0}$. Since we assumed that $e(\sO_v/ \IZ_p) < p - 1$, by \cite[\S7.5~Thm.~4]{BLR90}, $C_0 \to A_0 \times B_0$ remains an inclusion, so that for dimension reasons $C_0 \to \Gamma_{f_0}$ is actually an isomorphism. Therefore, $C$ is a lift of the graph $\Gamma_{f_0}$ and hence defines a homomorphism $f : A \to B$ which lifts $f_0$. 
\end{proof}

\begin{remark}
    If we do not impose $e(\sO_v/ \IZ_p) < p - 1$, then a slightly weaker condition holds: the morphism $C_0 \to A_0 \times B_0$, and hence $C_0 \to \Gamma_{f_0}$, might have a finite $p$-primary kernel (cf. \cite[\S7.5, Ex.8]{BLR90}). 
    One might compare \cref{thm: Junyi in char p} with the following consequence of Grothendieck-Messing + Serre-Tate (cf. \cite[Ch.~V, \S1, Thms.~1.6 and~1.10]{Mes72}, or \cite[Thm.~3.15 and~(3.8)]{BO83}). Suppose now $K$ denotes a $p$-adic field of ramification degree $< p - 1$ and $A, B$ be abelian schemes over $\sO_K$. Then $f_0 : A_0 \to B_0$ lifts to $A \to B$ if and only if $\H^1_\crys(f_0)$ respects the Hodge filtrations under the crystalline-de Rham comparison. One may compare \cite[\S7.5, Ex.~8]{BLR90} with \cite[Ex.~3.18]{BO83} for what happens when $e = p - 1$. 
\end{remark}

\section{The Hom Theorem}

Let us begin with a few simple linear algebra tricks.

\begin{lemma}
\label{lem: find basis}
    Let $V$ be a finite dimensional vector space over $\IQ$, and $W_\ell$ be a subspace of $V_\ell := V \tensor \IQ_\ell$. If $W_\ell$ is not rational, i.e., $W_\ell \neq (W_\ell \cap V) \tensor_\IQ \IQ_\ell$, then there exists a basis $\{ w_1, \cdots, w_r, e_1, \cdots, e_s \}$ for $V$ such that $\{ w_i \}$ is a basis for $W_\ell \cap V$, and $W_\ell$ contains an element $\epsilon$ of the form
    \begin{equation}
    \label{eqn: decompose e}
        \epsilon =  \sum_{i = 1}^m c_i e_i
    \end{equation}
    where $c_1, \cdots, c_m \in \IQ^\times_\ell$ are linearly independent over $\IQ$ for some $m \ge 2$.
\end{lemma}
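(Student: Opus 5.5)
We first pass to a quotient to remove the rational part of $W_\ell$. Let $U := W_\ell \cap V$, with basis $w_1, \dots, w_r$. Extend this to a basis $w_1,\dots,w_r,e'_1,\dots,e'_s$ of $V$, and write $V' = \mathrm{span}_\IQ(e'_j)$, so that $V = U \oplus V'$. Set $W'_\ell := W_\ell \cap V'_\ell$.

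Since $U_\ell \subseteq W_\ell$, we have $W_\ell = U_\ell \oplus W'_\ell$. The hypothesis that $W_\ell$ is not rational says $W_\ell \neq U_\ell$, so $W'_\ell \neq 0$. On the other hand, $W'_\ell \cap V' = W_\ell \cap V' = U \cap V' = 0$. So $W'_\ell$ is a nonzero subspace of $V'_\ell$ containing no nonzero rational vector, and it suffices to find the required $\epsilon$ inside $W'_\ell$, after a rational change of basis of $V'$ only.

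Next, pick any nonzero $\epsilon \in W'_\ell$ and write it in coordinates as $\epsilon = \sum_j a_j e'_j$ with $a_j \in \IQ_\ell$. Let $S$ be the $\IQ$-span of $a_1,\dots,a_s$ inside $\IQ_\ell$, and choose a $\IQ$-basis $c_1,\dots,c_m$ of $S$, with $m \ge 1$ since $\epsilon \ne 0$. Write $a_j = \sum_i q_{ij} c_i$ with $q_{ij} \in \IQ$, and put $v_i := \sum_j q_{ij} e'_j \in V'$. Then $\epsilon = \sum_{i=1}^m c_i v_i$.

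The vectors $v_1,\dots,v_m$ are $\IQ$-linearly independent in $V'$. To see this, suppose $\sum_i \lambda_i v_i = 0$ with $\lambda_i \in \IQ$. Then $\sum_i \lambda_i q_{ij} = 0$ for every $j$. The matrix $(q_{ij})$ has rank $m$, because its columns give the coordinates of $a_1,\dots,a_s$ in the basis $c_1,\dots,c_m$, and these coordinate vectors span $\IQ^m$ since the $a_j$ span $S$. Hence all $\lambda_i = 0$. Now extend $v_1,\dots,v_m$ to a basis $e_1 := v_1,\dots,e_m := v_m,e_{m+1},\dots,e_s$ of $V'$. Together with $w_1,\dots,w_r$ this gives a basis of $V$ in which $\epsilon = \sum_{i=1}^m c_i e_i$, and the $c_i$ are nonzero (as basis elements of $S$) and $\IQ$-linearly independent.

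It remains to check that $m \ge 2$, which is where the non-rationality of $W_\ell$ enters. If $m = 1$, then $\epsilon = c_1 e_1$, so $e_1 = c_1^{-1}\epsilon$ lies in $W'_\ell \cap V' = 0$. This contradicts $e_1 \neq 0$, so $m \ge 2$. The argument is entirely elementary; the only step needing care is the linear independence of the $v_i$, which rests on the rank argument for $(q_{ij})$ above.
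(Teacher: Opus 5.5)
Your proof is correct. It shares the paper's outer frame: you split off a rational complement of $W_\ell\cap V$, take a nonzero $\epsilon$ in the part of $W_\ell$ over that complement, and get $m\ge 2$ because $\epsilon$ cannot be a $\IQ_\ell$-multiple of a nonzero rational vector.

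Where you differ is the key step that produces $\IQ$-independent coefficients.

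\textbf{The paper's route.} It argues extremally. Among all bases of the complement $E$, it picks one minimizing the number of nonzero coordinates of $\epsilon$. It then shows that a rational relation $c_1=\sum_{j\ge 2} q_j c_j$ would allow rewriting $\epsilon=\sum_{j\ge 2}c_j(q_je_1+e_j)$, which is a basis with strictly smaller support.

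\textbf{Your route.} You construct the basis directly. You take a $\IQ$-basis $c_1,\dots,c_m$ of the $\IQ$-span of the coordinates $a_j$ of $\epsilon$, set $v_i=\sum_j q_{ij}e'_j$, and prove independence of the $v_i$ by a rank computation.

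\textbf{What each buys.} Your version is explicit. It also identifies the minimal support size as the basis-independent quantity $\dim_\IQ\bigl(\sum_j \IQ a_j\bigr)$. The paper's extremal version avoids the matrix bookkeeping. It is also the form the authors reuse almost verbatim over noncommutative division algebras in \cref{lem: minimal tensor expression} for the semisimplicity theorem. There, a direct construction would have to track left versus right spans and do the rank argument over a division ring. Your approach would still work in that setting, but less cleanly.
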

\begin{proof}
    Choose a complement $E$ of $W_\ell \cap V$ in $V$. 
    Fix a basis $\{ w_i \}_{i = 1}^r$ for $W_\ell \cap V$, and let $\sB$ be the set of all possible bases for $E$. As $W_\ell$ is not rational, it must contain some $\epsilon \in E_\ell := E \tensor \IQ_\ell$ such that $\IQ_\ell \epsilon \cap E = 0$. Fix this element and define $\alpha : \sB \to \IN$ which sends every $\{ e_j \}$ to the number of non-zero coordinates of $\epsilon$ with respect to basis $\{ e_j \}$. 
    It is not hard to see that $\alpha(\{ e_j \}) \ge 2$ for any $\{ e_j \} \in \sB$. Now let us choose some $\{ e_j \}$ that minimizes $\alpha$. Up to relabelling, we may assume $\epsilon$ decomposes as in (\ref{eqn: decompose e}). 

    We claim that $\{ c_1, \cdots, c_m \}$ is linearly independent over $\IQ$. Otherwise, up to relabelling $e_j$'s, there exist $q_j$'s in $\IQ$ such that
    $$ c_1 = \sum_{j = 2}^m q_j c_j \Rightarrow \epsilon = (\sum_{j = 2}^m q_j c_j) e_1 + (\sum_{j = 2}^m c_j e_j) = \sum_{j = 2}^m c_j (q_j e_1 + e_j). $$
    This contradicts the minimality of $\alpha(\{ e_j \})$, because $\{ e_1, e_{m + 1}, \cdots, e_s \} \cup \{ q_j e_1 + e_j \}_{2 \le j \le m} \in \sB$. 
\end{proof}

\begin{lemma}
\label{lem: irrationality}
Let $A$ be a simple abelian variety over a field $\kappa$, let $\ell \neq \mathrm{char\,}\kappa$, and set $D=\End_\kappa^0(A)$. If $c_1,\dots,c_m \in \End_\kappa(A) \tensor_{\IZ} \IZ_\ell \subseteq D\tensor_\IQ\IQ_\ell$, are left linearly independent over $D$, then
\begin{equation}
\Gamma:=\{(c_1x,c_2x,\dots,c_mx):x\in A[\ell^\infty]\}\subseteq A^m (\bar{\kappa})
\end{equation}
is Zariski dense in $A^m$.
\end{lemma}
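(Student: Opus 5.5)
The plan is to identify the Zariski closure $C := \overline{\Gamma}^{\Zar}$ and show that it cannot be proper. First note that $\Gamma$ is an $\ell$-divisible subgroup of $A^m[\ell^\infty]$. It is the image of the $\ell$-divisible group $A[\ell^\infty]$ under a $\IZ_\ell$-linear map, and each $c_i$ acts on $A[\ell^\infty] = V_\ell A/T_\ell A$ because $c_i \in \End(A)\tensor\IZ_\ell$ preserves $T_\ell A$. The statement is over $\kappa$ with no Galois-stability hypothesis. However, \cref{lem: ab sub} only uses that $H$ is an $\ell$-divisible subgroup, which guarantees that $\overline{H}^\Zar$ is a connected algebraic subgroup. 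If needed, I would pass to $\bar\kappa$ and use that abelian subvarieties of $A^m_{\bar\kappa}$ arise from a finite extension. So $C$ is an abelian subvariety of $A^m_{\bar\kappa}$.

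Suppose, for contradiction, that $C \neq A^m$. By Poincaré reducibility there is a nonzero homomorphism $\phi : A^m \to A'$ killing $C$, for instance the quotient map $A^m \to A^m/C$. We want to express $\phi$ through endomorphisms of $A$. The cleanest choice is a nonzero homomorphism $\phi : A^m \to A$ vanishing on $C$. Since $A^m/C$ is isogenous to a product of simple factors of $A^m$, and these are all isogenous to $A$ (over $\kappa$, and I will assume the relevant abelian subvarieties are defined over $\kappa$), such a $\phi$ exists up to isogeny. Write $\phi(y_1,\dots,y_m) = \sum_i d_i y_i$ with $d_i \in \End_\kappa(A)$, not all zero. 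Then $\phi$ vanishing on $\Gamma$ means $\sum_i d_i c_i x = 0$ for all $x \in A[\ell^\infty]$.

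Next pass to the Tate module. An element of $\End(A)\tensor\IZ_\ell$ that kills all of $A[\ell^\infty] = V_\ell A/T_\ell A$ must be $0$ on $V_\ell A$: if it kills every element of $\ell^{-n}T_\ell A/T_\ell A$ for all $n$, then it maps $V_\ell A$ into $T_\ell A$, and a $\IQ_\ell$-linear map with bounded image is zero. Moreover, $\End(A)\tensor\IZ_\ell \to \End(T_\ell A)$ is injective; this is the standard injectivity, which holds over any field. Hence $\sum_i d_i c_i = 0$ in $D\tensor\IQ_\ell$ with $d_i \in D$ not all zero. This contradicts left $D$-linear independence of the $c_i$.

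The main obstacle is the field-of-definition issue in the second step. $C$ is only defined over $\bar\kappa$, and the functional $\phi$ is most naturally constructed geometrically, whereas $D$ consists of $\kappa$-endomorphisms. To resolve this, I would first try to show that $C$ is defined over $\kappa$. Since each $c_i$ is $\kappa$-rational, $\Gamma$ is $\Gal_\kappa$-stable: $\sigma(c_i x) = c_i \sigma x$. Hence $C$ is $\Gal_\kappa$-stable and descends to $\kappa$, at least when $\kappa$ is perfect; otherwise I would use that $C$ is determined by its $\ell$-power torsion, which is Galois-stable. Then $A^m/C$ is a quotient over $\kappa$, isogenous to $A^r$ with $r>0$ by simplicity of $A$ over $\kappa$ and Poincaré reducibility. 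This gives a nonzero $\kappa$-homomorphism $A^m \to A$ killing $C$, with coefficients in $\End_\kappa(A)$ after clearing denominators.
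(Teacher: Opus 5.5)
Your proposal is correct and follows essentially the same route as the paper. The closure $C$ of $\Gamma$ is an abelian subvariety, and Poincar\'e reducibility together with simplicity of $A$ yields a nonzero $\phi=\sum_i d_i\colon A^m\to A$ with $d_i\in\End_\kappa(A)$ killing $C$; faithfulness of $\End_\kappa(A)\tensor\IZ_\ell$ on $A[\ell^\infty]$ then turns this into the forbidden relation $\sum_i d_ic_i=0$. Your extra remark that $\Gamma$ is $\Gal_\kappa$-stable (because the $c_i$ are $\kappa$-rational), so that $C$ and hence $\phi$ are defined over $\kappa$, makes explicit a point the paper leaves implicit when it invokes \cref{lem: ab sub}.
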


We remark that if $c_1, \cdots, c_m \in \IZ_\ell$, then left linear independence over $D$ is equivalent to linear independence over $\IQ$. 

\begin{proof}
Set $C=\overline{\Gamma}^\Zar$. Then $C$ is an abelian subvariety of $A^m$. Suppose that $C\neq A^m$. By Poincar\'e's reducibility theorem and the simplicity of $A$, there exists a non-trivial homomorphism $\phi:A^m\to A$ such that $C\subseteq\ker(\phi)$. Write
\begin{equation}
\phi(y_1,\dots,y_m)=\sum_{i=1}^m f_i(y_i),
\end{equation}
where $f_i\in\End_\kappa(A)$ are not all zero. Because $C \subseteq\ker(\phi)$ and $\End_\kappa(A)\tensor_{\IZ}\IZ_\ell$ acts faithfully on $A[\ell^\infty]$, we have
\begin{equation*}
\sum_{i=1}^m f_i c_i x=0 \text{ for every $x\in A[\ell^\infty]$} \Rightarrow \sum_{i=1}^m f_i c_i=0. 
\end{equation*}
But the $c_i$ are left linearly independent over $D$, so $f_i=0$ for every $i$, a contradiction.
\end{proof}

\begin{theorem}
\label{thm: Faltings hom}
    Let $A$ and $B$ be abelian varieties over a number field $K$. Then the map 
    $$ \Hom_K(A, B) \tensor \IZ_\ell \to \Hom_{\Gal_K}(T_\ell A, T_\ell B)$$
    is an isomorphism. 
\end{theorem}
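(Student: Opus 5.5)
The plan is to reduce to Tate's theorem at a single well-chosen place, and then use \cref{thm: Junyi in char p} to lift special-fibre homomorphisms to $K$. \emph{Injectivity} and \emph{saturation} are standard. If $f \in \Hom_K(A,B)$ kills $A[\ell^n]$, then $f$ factors through multiplication by $\ell^n$. Hence the cokernel of $\Hom_K(A,B)\tensor\IZ_\ell \to \Hom(T_\ell A, T_\ell B)$ is torsion free. It therefore suffices to prove surjectivity after $\tensor\IQ_\ell$, that is, $W_\ell := \Hom_{\Gal_K}(V_\ell A, V_\ell B)$ equals $\Hom^0_K(A,B)\tensor\IQ_\ell$.

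First I fix a finite place $v$ of good reduction for $A$ and $B$ with residue characteristic $p \neq \ell$, chosen unramified over a large $p$ so that $e(\sO_v/\IZ_p)=1<p-1$. Set $V := \Hom^0_k(A_0,B_0)$, a finite-dimensional $\IQ$-vector space. Decomposition groups at $v$ act through $\Gal_k$ on Tate modules, so $W_\ell \subseteq \Hom_{\Gal_k}(V_\ell A_0, V_\ell B_0)$. By Tate's theorem the latter equals $V_\ell = V\tensor\IQ_\ell$, so $W_\ell$ is a subspace of $V_\ell$. Suppose $W_\ell$ is rational, i.e.\ spanned by $W_\ell\cap V$. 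Take any $f_0\in W_\ell\cap V$ and scale it to lie in $\Hom_k(A_0,B_0)$. Then $T_\ell(f_0)$ is $\Gal_K$-equivariant, so by \cref{thm: Junyi in char p} it lifts to some $f\in\Hom_K(A,B)$. Hence $W_\ell \subseteq \Hom^0_K(A,B)\tensor\IQ_\ell$, which finishes the proof. So everything reduces to the \textbf{rationality of $W_\ell$}, which is the main obstacle. This is the ``diagonal trick'' advertised in the introduction.

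To prove rationality, argue by contradiction and apply \cref{lem: find basis}. This gives $\epsilon=\sum_{i=1}^m c_ie_i\in W_\ell$ with $m\ge 2$, where $e_i\in\Hom_k(A_0,B_0)$ (after scaling), the classes $e_1,\dots,e_m$ are $\IQ$-independent modulo $W_\ell\cap V$, and $c_i\in\IZ_\ell^\times$ are $\IQ$-linearly independent. Consider the $\Gal_K$-stable, $\ell$-divisible subgroup $H=\{(x,\epsilon x): x\in A[\ell^\infty]\}\subseteq A\times B$. Its Zariski closure $C$ is an abelian subvariety with good reduction. By \cref{prop: tube absorbance}, its special fibre $C_0$ is contained in the Zariski closure $Z_0$ of the reductions $H_0=\{(x,\sum c_ie_ix)\}$ in $A_0\times B_0$. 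Conversely, $Z_0\subseteq C_0$, because reductions of points of $C$ lie on $C_0$. So $C_0=Z_0$ (up to the isogeny issue excluded by $e<p-1$).

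Next I compute $Z_0$ using \cref{lem: irrationality}, applied to simple isogeny factors of $A_0$. Since the $c_i\in\IZ_\ell$ are $\IQ$-independent, they are left independent over each $\End^0$, so the points $(x,c_1x,\dots,c_mx)$ are Zariski dense in $A_0\times A_0^m$. Pushing forward along $(x,y_1,\dots,y_m)\mapsto(x,\sum e_iy_i)$ gives $Z_0=A_0\times\sum_i e_i(A_0)$. In particular, $Z_0$ contains each graph $\Gamma_{e_i}$. Since $C$ is defined over $K$, $T_\ell C=T_\ell C_0$ is $\Gal_K$-stable, and it contains the graphs of the $e_i$ (indeed of all $\IQ_\ell$-combinations of them).

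The remaining step, which is where I expect the real work, is to extract from this $\Gal_K$-stable rational object a $\Gal_K$-equivariant homomorphism outside $W_\ell\cap V$. One option is to intersect $T_\ell C$ with the graphs of the rational homomorphisms in $\mathrm{span}_\IQ\{e_i\}$, using that $C_0\to A_0$ has abelian-subvariety kernel $0\times\sum_ie_i(A_0)$ that also lifts over $K$. Another option is to run the same argument for $A\times A^m\to B$, so as to show that each $e_i$ itself lies in $W_\ell$. Either way this contradicts the choice of the $e_i$ in \cref{lem: find basis}, so $W_\ell$ is rational.
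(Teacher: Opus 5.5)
Your reduction is correct and matches the paper's Cartesian-square reformulation: by Tate's theorem and \cref{thm: Junyi in char p}, everything comes down to the rationality of $W_\ell$, and \cref{lem: find basis} supplies $\epsilon=\sum c_ie_i$. The gap is in the diagonal step. The Zariski closure $C$ of the graph of $\epsilon$ in $A\times B$ carries no information in general.

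For instance, let $A=B$ be simple over $K$. By Poincar\'e reducibility, $C\neq A\times A$ would give $f,g\in\End_K(A)$, not both zero, with $f+g\epsilon=0$ on $A[\ell^\infty]$. That means $\epsilon=-g^{-1}f\in\End^0_K(A)$, which is exactly what you are assuming fails. So $C=A\times A$ and $C_0=A_0\times A_0$. The fact that $T_\ell C$ is $\Gal_K$-stable and contains the graphs $\Gamma_{e_i}$ is then vacuous, so your first option cannot produce a contradiction.

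Separately, your density claim for $(x,c_1x,\dots,c_mx)$ in $A_0^{m+1}$ is false in general. It needs $1,c_1,\dots,c_m$ to be $\IQ$-independent, and $1$ may lie in the $\IQ$-span of the $c_i$, e.g.\ after normalizing $c_1=1$.

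The missing idea is to put the irrational coefficients into separate coordinates. Then the special fibre is trapped in the graph of an honest homomorphism, while the generic fibre is forced to be large. The paper takes $H=\{(c_1x,\dots,c_mx,\sum_i e_i(c_ix)) : x\in A[\ell^\infty]\}\subseteq A^m\times B$. This is $\Gal_K$-stable because $c_i\in\IZ_\ell$ and $\epsilon$ is equivariant. The argument is a dimension squeeze:
\begin{itemize}
\item The reductions of $H$ lie in the graph of $G_0=\sum_i e_i\colon A_0^m\to B_0$. So \cref{prop: tube absorbance} gives $C_0\subseteq\Gamma_{G_0}$, hence $\dim C\le m\dim A$.
\item \cref{lem: irrationality}, applied over $K$ to $A$ with the $\IQ$-independent scalars $c_i$, shows that $C\to A^m$ is surjective, hence $\dim C\ge m\dim A$.
\item Therefore $C_0=\Gamma_{G_0}$ and $C\to A^m$ is an isomorphism. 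So $C$ is the graph of a $K$-homomorphism whose restrictions to the factors lift the $e_i$, contradicting $e_i\notin\Hom^0_K(A,B)$.
\end{itemize}
Your second option points in this direction, but you never carry it out. The extra factor $A$ in $A\times A^m$ is unnecessary, and it reintroduces the density problem above. Most importantly, the dimension squeeze, which is the whole content of the argument, is not supplied.
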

\begin{proof}
    By standard reductions, let us assume that $A, B$ are both geometrically simple.
    Choose an unramified place $v$ of $K$ of characteristic $p \neq 2, \ell$ such that $A$ and $B$ have good reduction at $v$. 
    By \cref{thm: Junyi in char p}, the following natural diagram 
\begin{equation*}
    \begin{tikzcd}
	{\Hom_K(A, B)_\IQ} & {\Hom_k(A_0, B_0)_\IQ} \\
	{\Hom_{\Gal_K} (V_\ell A, V_\ell B)} & {\Hom_{\Gal_k}(V_\ell A_0, V_\ell B_0)}
	\arrow[from=1-1, to=1-2]
	\arrow[from=1-1, to=2-1]
	\arrow["\lrcorner"{anchor=center, pos=0.125}, draw=none, from=1-1, to=2-2]
	\arrow[from=1-2, to=2-2]
	\arrow[from=2-1, to=2-2]
\end{tikzcd}
\end{equation*}
is Cartesian. All arrows above are injective, so we view every space above as a subspace of the bottom right corner, i.e., $\Hom_{\Gal_k}(V_\ell A_0, V_\ell B_0)$. Note that by Tate's theorem one may view $\Hom_k (A_0, B_0)_\IQ$ as giving a rational structure to $\Hom_{\Gal_k}(V_\ell A_0, V_\ell B_0)$. From this point of view, Faltings' isogeny theorem is equivalent to saying that $\Hom_{\Gal_K} (V_\ell A, V_\ell B)$ is a \textit{rational} subspace (i.e., admits a rational basis). 

Suppose for the sake of contradiction that $\Hom_{\Gal_K} (V_\ell A, V_\ell B)$ is not rational. Then by \cref{lem: find basis}, we can find a basis $\{ h_1, \cdots, h_r, g_1, \cdots, g_s \}$ for $\Hom_k(A_0, B_0)_\IQ$ such that $\{ h_1, \cdots, h_r \}$ is a basis for $\Hom_K(A, B)_\IQ$, and $\Hom_{\Gal_K} (V_\ell A, V_\ell B)$ contains an element of the form 
$$ \sum_{i = 1}^m c_i g_i, \text{ where $c_1, \cdots, c_m \in \IQ^\times_\ell$ are linearly independent over $\IQ$ for some $m \ge 2$.} $$
We may arrange $c_i\in\IZ_\ell$ and $g_i\in\Hom_k(A_0,B_0)$ by clearing denominators.
Consider 
$$ H := \{ (c_1 x, c_2 x, \cdots, c_m x, \sum_{i = 1}^m g_i(c_i x)) : x \in A[\ell^\infty] \} \subseteq (A^m \times B)(\bar{K}). $$
Take $C = \overline{H}^\Zar \subseteq A^m \times B$, and let $C_0$ be its reduction. Consider the map $G_0 : A_0^m \to B_0$ defined by 
$$ G_0(y_1, y_2, \cdots, y_m) = \sum_{i = 1}^m g_i(y_i). $$
Then the reductions of $H$ are contained in $\Gamma_{G_0}$.
Note that $H$ is $\ell$-divisible and $\Gal_K$-stable. By \cref{prop: tube absorbance}, $C_0 \subseteq \Gamma_{G_0}$. Therefore, $\dim C = \dim C_0 \le m \dim A$. 

By \cref{lem: irrationality}, the projection of $C$ to $A^m$ must be surjective, so that $\dim C \ge m \dim A$. On the other hand, $\dim C = \dim C_0 \le \dim \Gamma_{G_0}$, so we must have $C_0 = \Gamma_{G_0}$. Therefore, the projection $C_0 \to A_0^m$ is an isomorphism. This in turn implies that the projection $C \to A^m$ on the generic fiber is also an isomorphism, which is equivalent to saying that $C \subseteq A^m \times B$ is the graph of some morphism $G:  A^m \to B$. It is not hard to see that $G$ is necessarily of the form $$ G(y_1, y_2, \cdots, y_m) = \sum_{i = 1}^m \wt{g}_i(y_i). $$ for some $\wt{g}_i$ that lifts $g_i$. But by assumption $g_i$'s do not come from $\Hom_K(A, B)_\IQ$. 
\end{proof}

\section{Semisimplicity}
\label{sec: semisimplicity}
Let us introduce some preliminary observations about semisimple algebra. 
\begin{notation}
Let $V$ be a vector space over a field $k$ and $D \subseteq \End_k(V)$ be a $k$-subalgebra. We shall use the following notation.
\begin{itemize}
    \item $D' := \End_D(V)$ denotes the centralizer of $D$ in $\End_k(V)$.
    \item For a subspace $W \subseteq V$, set $L_D(W) := \{d \in D : dW=0\}$ and call it the left annihilator of $W$; write 
    $$ L^\perp_D(W) := \{e \in L_D(W) : e^2=e,\ \ker(e)=W\} \subseteq L_D(W) $$
    for the subset of idempotent projectors which kill exactly $W$. 
    \item For a subalgebra $C \subseteq D$, subset $S \subseteq D$ and a subspace $W \subseteq V$,
    $$ CW := \sum_{f\in C} f(W), \text{ and } \ker(S) := \bigcap_{f\in S}\ker(f). $$
    Note that $CW$ is the smallest $C$-invariant subspace of $V$ containing $W$.
\end{itemize}
\end{notation}

Let us recall a basic fact about semisimple algebras. 
\begin{lemma}
\label{prop: gen'd by idempotents}
    Let $D$ be a semisimple algebra over a field $k$. Then every left ideal $I$ is of the form $De$ for some idempotent $e$. If $D \subseteq \End_k(V)$ for a vector space $V$ over $k$, then $I = De$ for any idempotent $e \in I$ with $\ker(e) = \ker(I)$; in fact, for every $i \in I$, $i = ie$. 
\end{lemma}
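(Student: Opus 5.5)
The first claim is the standard structure theory of semisimple rings, and the second is a short kernel computation; I plan to prove them in that order.

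\emph{Existence of $e$.} Since $D$ is semisimple, every left $D$-module is semisimple. In particular the left ideal $I$ is a direct summand of $D$ as a left module, so $D = I \oplus J$ for some left ideal $J$. Decompose $1 = e + (1-e)$ with $e \in I$ and $1-e \in J$. For $i \in I$ we get $i = ie + i(1-e)$, where $ie \in I$ and $i(1-e) \in J$. Since $i - ie$ lies in both $I$ and $J$, which intersect trivially, we get $i = ie$. Taking $i = e$ shows $e^2 = e$, and then $I = Ie \subseteq De \subseteq I$, so $I = De$.

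\emph{Any idempotent with the right kernel works.} Now let $D \subseteq \End_k(V)$, and let $e \in I$ be any idempotent with $\ker(e) = \ker(I)$. The key step is to show $i = ie$ for every $i \in I$. Since $e$ is idempotent, $V = \ker(e) \oplus \mathrm{im}(e)$, and $1-e$ is the projection onto $\ker(e)$. So $\mathrm{im}(1-e) = \ker(e) = \ker(I) \subseteq \ker(i)$. Hence $i(1-e) = 0$ as an endomorphism of $V$, i.e.\ $i = ie$; this holds in $D$ because $D$ acts faithfully, being a subalgebra of $\End_k(V)$. It follows that $I = Ie \subseteq De \subseteq I$, so $I = De$.

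Neither step is a real obstacle. The only inputs are the splitting of left ideals in a semisimple ring and the faithfulness of $D$ on $V$, which is automatic from $D \subseteq \End_k(V)$.
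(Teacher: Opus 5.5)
Your proof is correct and follows essentially the same route as the paper's. Your complement $J$ with $1 = e + (1-e)$ is the paper's $D$-linear projection $\pi : D \to I$ with $e = \pi(1)$, and your observation that $\mathrm{im}(1-e) = \ker(e) = \ker(I) \subseteq \ker(i)$ is the paper's decomposition $v = v_0 + v_1$ with $v_0 \in \ker(e)$, $v_1 \in \mathrm{im}(e)$. You also check $e^2 = e$ explicitly in the first part, which the paper leaves implicit.
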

\begin{proof}
    The short exact sequence $0 \to I \to D \to D/I \to 0$ of left $D$-modules splits, so we have a projection $\pi : D \to I$ of $D$-modules. Set $e = \pi (1)$. Then for every $d \in D$, $\pi(d) = \pi(d \cdot 1) = d \pi(1) = de$. 
    
    We now prove the second assertion. Let $e \in I$ be any idempotent such that $\ker(e) = \ker(I)$. Then $De \subseteq I$ by default. Now, every $v \in V$ uniquely decomposes to $v_0 + v_1$ where $v_0 \in \ker(e)$ and $v_1 \in \mathrm{im}(e)$. For every $i \in I$, we have $i(v_0 + v_1) = i(v_1) = i e (v_1) + ie(v_0)$ as $i$ also kills $v_0$. Hence $i = i e \in De$. 
\end{proof}

\begin{lemma}
\label{lem: produce idempotent}
Let $V$ be a finite dimensional vector space over a field $k$.
Let $D \subseteq \End_k(V)$ be a semisimple $k$-algebra and $W \subseteq V$ a subspace. If $e \in D$ is an idempotent such that $L_D(W)=De$, then $\ker(L_D(W))=\ker(e)=D'W$. 
\end{lemma}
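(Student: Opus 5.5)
We prove the two equalities $\ker(L_D(W))=\ker(e)$ and $\ker(e)=D'W$ in turn; the first is formal and the second carries the content.

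\textbf{First equality.} Since $L_D(W)=De$ and $e\in De$, we get $\ker(L_D(W))\subseteq\ker(e)$. Conversely, any $de\in De$ kills $\ker(e)$, so $\ker(e)\subseteq\ker(L_D(W))$.

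\textbf{Easy inclusion $D'W\subseteq\ker(e)$.} We have $e\in L_D(W)$, so $eW=0$. For $f\in D'$, $f$ commutes with $e$, so $e f(w)=f(e w)=0$. Hence $f(W)\subseteq\ker(e)$ for all $f\in D'$, which gives $D'W\subseteq\ker(e)$.

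\textbf{Reverse inclusion $\ker(e)\subseteq D'W$.} Set $U:=D'W$. This is a $D'$-submodule of $V$ containing $W$, and we compare annihilators in $D$.
\begin{itemize}
\item Since $W\subseteq U$, we have $L_D(U)\subseteq L_D(W)$.
\item Conversely, take $d\in L_D(W)$ and $f\in D'$. Then $d f(W)=f(dW)=0$, so $d$ kills $U$. Hence $L_D(U)=L_D(W)=De$.
\end{itemize}
It therefore suffices to show that a $D'$-submodule $U$ satisfies $U=\ker(L_D(U))$, i.e.\ that $U$ is recovered from its annihilator in $D$.

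This is the main obstacle, and it is where semisimplicity enters through the double centralizer theorem. The steps are as follows.
\begin{enumerate}
\item Since $D$ is semisimple and $V$ is a faithful finite-dimensional $D$-module, $D'$ is semisimple and $D''=D$.
\item Since $D'$ is semisimple, $U$ has a $D'$-stable complement $U'$ with $V=U\oplus U'$.
\item Let $\pi$ be the projection onto $U'$ along $U$. It commutes with $D'$, so $\pi\in D''=D$.
\item $\pi$ kills $U$, so $\pi\in L_D(U)$, and $\ker(\pi)=U$.
\end{enumerate}
This gives $\ker(L_D(U))\subseteq\ker(\pi)=U$, while $U\subseteq\ker(L_D(U))$ holds trivially. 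Combining with the first equality and $L_D(U)=De$:
\[
\ker(e)=\ker(L_D(W))=\ker(L_D(U))=U=D'W.
\]

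If one prefers to avoid quoting the double centralizer theorem, step 1 can be checked directly. Decompose $V$ into isotypic components for $D$. Then $D'$ is a product of matrix algebras over the division algebras $\End_D(S_i)$, which is semisimple. The identity $D''=D$ is then the Jacobson density theorem in finite dimension.
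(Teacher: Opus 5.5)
Your proof is correct and follows the paper's argument. Both use the double centralizer theorem to put a $D'$-equivariant projection inside $D$: your projection onto the $D'$-stable complement along $D'W$ is exactly the paper's $1-\pi$, where $\pi$ is the projection onto $D'W$. Your detour through $L_D(D'W)=L_D(W)$ is harmless but unnecessary, since your projection already kills $W$ and hence lies in $L_D(W)$ directly.
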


\begin{proof}
Since $L_D(W)=De$ and $e\in De$, we have
\(\ker(L_D(W))=\ker(De)=\ker(e).\)
Moreover, $D'W\subseteq \ker(e)$ because $eW=0$ and $e$ commutes with $D'$.

For the reverse inclusion, since $D$ is semisimple, so is $D'$, and there is a $D'$-linear projection $\pi:V\to D'W$. 
By the double centralizer theorem, $\pi\in D''=D$. Since $\pi$ restricts to the identity on $D'W$, and hence on $W$, we have $(1-\pi)W=0$, so $1-\pi\in L_D(W)$. Thus, if $v\in\ker(L_D(W))$, then $(1-\pi)v=0$, and therefore $v=\pi(v)\in D'W$. This implies $\ker(L_D(W))\subseteq D'W$.
\end{proof}

\begin{lemma}
\label{lem: relative position}
Let $V$ be a finite dimensional vector space over a field $k$ and $W \subseteq V$ be a subspace. 
Suppose that $D \subseteq B \subseteq \End_k(V)$ are semisimple $k$-algebras and $L_B^\perp(W)\neq \emptyset$. Then 
\(L_B(W)=B\cdot L_D(W)\)
if and only if $L_D^\perp(W)\neq \emptyset$.
\end{lemma}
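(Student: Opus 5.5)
The plan is to prove both directions directly from \cref{prop: gen'd by idempotents}. The hypothesis $L_B^\perp(W)\neq\emptyset$ will only be used to pin down $\ker(L_B(W))$. So I first record the observation
\[
\ker(L_B(W)) = W.
\]
Indeed, every element of $L_B(W)$ kills $W$, so $W\subseteq \ker(L_B(W))$. Conversely, if $e'\in L_B^\perp(W)$, then $e'\in L_B(W)$ and $\ker(e')=W$, so $\ker(L_B(W))\subseteq \ker(e') = W$. The same argument with $D$ in place of $B$ shows that if $L_D^\perp(W)\neq\emptyset$ then $\ker(L_D(W))=W$.

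For the direction ``$\Leftarrow$'', pick $e\in L_D^\perp(W)$, i.e.\ an idempotent $e\in D$ with $eW=0$ and $\ker(e)=W$.

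\begin{enumerate}
\item The inclusion $B\cdot L_D(W)\subseteq L_B(W)$ is automatic: if $d$ kills $W$, so does $bd$ for every $b\in B$. This holds in either direction of the lemma.
\item Since $D\subseteq B$, the element $e$ lies in $L_B(W)$. The set $L_B(W)$ is a left ideal of the semisimple algebra $B$.
\item $e$ is an idempotent in this ideal with $\ker(e)=W=\ker(L_B(W))$, by the observation above.
\item The second assertion of \cref{prop: gen'd by idempotents}, applied to $B\subseteq\End_k(V)$ and the left ideal $I=L_B(W)$, gives $L_B(W)=Be$.
\item Since $e\in L_D(W)$, we get $Be\subseteq B\cdot L_D(W)$, which is the remaining inclusion.
\end{enumerate}

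For the direction ``$\Rightarrow$'', assume $L_B(W)=B\cdot L_D(W)$.

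\begin{enumerate}
\item $L_D(W)$ is a left ideal of the semisimple algebra $D$. By the first assertion of \cref{prop: gen'd by idempotents}, $L_D(W)=De$ for some idempotent $e\in D$.
\item Then $L_B(W)=B\cdot De = Be$, using $BD=B$ because $1\in D$.
\item Taking kernels gives $\ker(L_B(W))=\ker(Be)=\ker(e)$, where $\ker(Be)=\ker(e)$ because $1\in B$.
\item By the observation, the left-hand side equals $W$, so $\ker(e)=W$.
\item Also $e\in L_D(W)$ and $e$ is idempotent, so $e\in L_D^\perp(W)$, which is therefore nonempty.
\end{enumerate}

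There is no serious obstacle here. The only point needing care is the role of the hypothesis $L_B^\perp(W)\neq\emptyset$. In general $\ker(L_B(W))$ may be strictly larger than $W$ (it is $B'W$ by \cref{lem: produce idempotent}), and the hypothesis is exactly what forces it to equal $W$. That equality is what lets us identify $\ker(e)$ with $W$ in the forward direction, and what lets us apply the second part of \cref{prop: gen'd by idempotents} in the backward direction. I would also state explicitly that ``$\ker$'' of a set means the common kernel, so that $\ker(Be)=\ker(e)$ is immediate.
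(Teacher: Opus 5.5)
Your proof is correct and takes essentially the same approach as the paper. The backward direction is identical. In the forward direction you take the idempotent generator $e$ of $L_D(W)$ first and read off $\ker(e)=\ker(L_B(W))=W$ from $L_B(W)=Be$, whereas the paper first shows $\ker(L_D(W))=W$ using an element of $L_B^\perp(W)$ lying in $B\cdot L_D(W)$ and only then picks the generator; this is a reordering of the same argument.
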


\begin{proof}
Suppose first that $L_B(W)=B\cdot L_D(W)$, and choose $e\in L_B^\perp(W)$. Since $e\in B\cdot L_D(W)$, we have
$\ker(L_D(W)) \subseteq\ker(e)=W.$
The reverse inclusion is tautological, so $\ker(L_D(W))=W$. By \cref{prop: gen'd by idempotents}, there is an idempotent $d\in L_D(W)$ with $L_D(W)=Dd$. 
Again by \cref{prop: gen'd by idempotents}, $\ker(d)=\ker(L_D(W))=W$, so $d\in L_D^\perp(W)$.

Conversely, suppose that $d\in L_D^\perp(W)$. Since $d\in L_D(W)\subseteq L_B(W)$ and $\ker(d)=W=\ker(L_B(W))$, 
\cref{prop: gen'd by idempotents} gives $L_D(W)=Dd$ and $L_B(W) = Bd$. Therefore
$B\cdot L_D(W) = BDd = Bd = L_B(W)$. 
\end{proof}

\begin{corollary}
\label{cor: relative position}
    Suppose that $A$ is an abelian variety over a number field $K$, $v$ is a place of good reduction with residue field $k$, and $\ell\neq\mathrm{char\,}k$. Let $A_0$ be the special fiber and identify $V_\ell:=V_\ell A=V_\ell A_0$. Set $D=\End_K(A)_\IQ$, $B=\End_k(A_0)_\IQ$, $D_\ell  = D \tensor_\IQ \IQ_\ell$ and $B_\ell = B \tensor_\IQ \IQ_\ell$. Then the action of $\Gal_K$ on $V_\ell$ is semisimple if and only if
    \begin{equation*}
        L_{B_\ell}(W_\ell)=B_\ell\cdot L_{D_\ell}(W_\ell)
    \end{equation*}
    for every $\Gal_K$-invariant subspace $W_\ell\subseteq V_\ell$.
\end{corollary}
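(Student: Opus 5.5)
The plan is to reduce the corollary to \cref{lem: relative position}, applied with $k=\IQ_\ell$, $V=V_\ell$, $D_\ell\subseteq B_\ell\subseteq\End_{\IQ_\ell}(V_\ell)$ and $W=W_\ell$. Three inputs need checking first. The algebra $B_\ell$ is semisimple: $B$ is semisimple by Poincar\'e reducibility, hence so is $B_\ell$ in characteristic $0$. The same reasoning shows $D_\ell$ is semisimple. Both embed in $\End(V_\ell)$, since $\End\tensor\IZ_\ell\to\End(T_\ell)$ is injective. The substantive hypothesis is $L^\perp_{B_\ell}(W_\ell)\neq\emptyset$ for every $\Gal_K$-invariant $W_\ell$.

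For this hypothesis we use that $W_\ell$ is $\Gal_k$-invariant, because $\Gal_k$ is a quotient of the decomposition group at $v$, which lies in $\Gal_K$. By Tate's theorem, $B_\ell=\End_{\Gal_k}(V_\ell A_0)$, and $V_\ell A_0$ is a semisimple $\Gal_k$-module. Hence $W_\ell$ has a $\Gal_k$-stable complement $U$. The projector $e$ with kernel $W_\ell$ and image $U$ is $\Gal_k$-equivariant, so $e\in B_\ell$, and it lies in $L^\perp_{B_\ell}(W_\ell)$. With this, \cref{lem: relative position} says the displayed equality holds if and only if $L^\perp_{D_\ell}(W_\ell)\neq\emptyset$.

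It then remains to show that $\Gal_K$ acts semisimply on $V_\ell$ if and only if $L^\perp_{D_\ell}(W_\ell)\neq\emptyset$ for every invariant $W_\ell$. Here \cref{thm: Faltings hom} gives $D_\ell=\End_{\Gal_K}(V_\ell)$. If the action is semisimple, choose an invariant complement $U$ of $W_\ell$. The projector with kernel $W_\ell$ and image $U$ is Galois-equivariant, so it lies in $D_\ell$ and is an element of $L^\perp_{D_\ell}(W_\ell)$. Conversely, given $e\in L^\perp_{D_\ell}(W_\ell)$, the map $e$ commutes with $\Gal_K$, so $\mathrm{im}(e)$ is a $\Gal_K$-invariant complement to $\ker e=W_\ell$. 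Every invariant subspace therefore has an invariant complement, which is semisimplicity.

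The only real input is the identification $D_\ell=\End_{\Gal_K}(V_\ell)$ from the Hom theorem, together with Tate's theorem (both its Hom and semisimplicity parts) at the finite place. The one step needing care is producing a projector in $B_\ell$ rather than just some $\Gal_k$-equivariant map, and Tate's semisimplicity supplies this. Everything else is formal.
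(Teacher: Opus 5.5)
Your proposal is correct and follows the paper's proof. You use the Hom theorem (\cref{thm: Faltings hom}) to turn semisimplicity into $L^\perp_{D_\ell}(W_\ell)\neq\emptyset$, use Tate's theorem at $v$ to get $L^\perp_{B_\ell}(W_\ell)\neq\emptyset$, and then invoke \cref{lem: relative position}; you also spell out two points the paper leaves implicit, namely why $W_\ell$ is $\Gal_k$-stable and why the $\Gal_k$-equivariant projector lies in $B_\ell$.
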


\begin{proof}
    Since $D$ and $B$ are semisimple, so are their scalar extensions $D_\ell$ and $B_\ell$ (see e.g., \cite[\S7.9]{Voi21}).
    By \cref{thm: Faltings hom}, $D_\ell=\End_{\Gal_K}(V_\ell)$. Hence the action of $\Gal_K$ on $V_\ell$ is semisimple if and only if $L_{D_\ell}^\perp(W_\ell)\neq \emptyset$ for every $\Gal_K$-invariant subspace $W_\ell$. Now choose such a $W_\ell$. As the $\Gal_k$-action on $V_\ell$ is semisimple, $L_{B_\ell}^\perp(W_\ell) \neq \emptyset$. The result now follows from \cref{lem: relative position}.
\end{proof}

\begin{remark}
\label{rmk: pump semisimplicity}
In \cref{lem: relative position}, the hypothesis $L_B^\perp(W)\neq \emptyset$ is certainly necessary, since $L_D^\perp(W)\subseteq L_B^\perp(W)$. While this observation is trivial, it is precisely at this point in \cref{cor: relative position} that we use the semisimplicity of the $\Gal_k$-action on $V_\ell A_0$, and this is why we say the equidistribution argument to be given below ``pumps'' semisimplicity from finite fields to number fields.
\end{remark}

Much of linear algebra works when the scalar field is replaced by a division algebra. For example, the ``minimal support trick'' used in the proof of \cref{lem: find basis} works for possibly non-commutative division algebras as well. We shall need the following. 
\begin{lemma}
\label{lem: minimal tensor expression}
Let $D$ be a division algebra, let $M$ be a right $D$-module, and let $N$ be a left $D$-module. Suppose that $0\neq q\in M\tensor_D N$ and that $$q=\sum_{i=1}^m x_i\tensor y_i$$ is an expression with $m$ minimal. Then $x_1,\dots,x_m$ are right linearly independent over $D$, and $y_1,\dots,y_m$ are left linearly independent over $D$.
\end{lemma}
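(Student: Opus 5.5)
The argument is a minimal-length contradiction, symmetric in the two factors, so it suffices to treat the $x_i$; the claim for the $y_i$ follows by the mirror-image computation. Suppose that $x_1,\dots,x_m$ are right linearly dependent over $D$. Then there are $d_1,\dots,d_m\in D$, not all zero, with $\sum_i x_i d_i=0$. After relabelling we may assume $d_m\neq 0$. Since $D$ is a division algebra, we can multiply on the right by $d_m^{-1}$ and solve:
$$x_m=\sum_{i=1}^{m-1} x_i a_i, \qquad a_i:=-d_id_m^{-1}\in D.$$

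Substituting into $q$ and using the balancing relation $xa\tensor y=x\tensor ay$ of $\tensor_D$, we get
$$q=\sum_{i=1}^{m-1}x_i\tensor y_i+\sum_{i=1}^{m-1}x_ia_i\tensor y_m=\sum_{i=1}^{m-1}x_i\tensor (y_i+a_iy_m).$$
This is an expression of $q$ with $m-1$ terms, contradicting the minimality of $m$. The case $m=1$ is included: dependence of a single $x_1$ means $x_1d_1=0$ with $d_1\neq 0$, so $x_1=0$ and $q=0$, contrary to the hypothesis $q\neq 0$. In general the hypothesis $q\neq 0$ only guarantees $m\ge 1$, and the reduction above covers every $m\ge1$, because an empty sum equals $0$.

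For the $y_i$, suppose $\sum_i d_iy_i=0$ with $d_m\neq0$. Multiplying on the left by $d_m^{-1}$ gives $y_m=\sum_{i<m}b_iy_i$ with $b_i:=-d_m^{-1}d_i$. Then
$$q=\sum_{i<m}(x_i+x_mb_i)\tensor y_i,$$
again shorter than the minimal expression.

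There is no real obstacle. The only point needing care is the noncommutativity: right dependence must be solved by right multiplication by $d_m^{-1}$, left dependence by left multiplication, and the scalars must then be moved across the tensor sign in the correct direction. The division-algebra hypothesis enters exactly in inverting $d_m$.
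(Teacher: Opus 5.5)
Your proof is correct and is essentially the paper's own argument: invert a nonzero coefficient of a dependence relation, then move the scalars across $\tensor_D$ to get an expression with $m-1$ terms. The only differences are cosmetic — you eliminate $x_m$ rather than $x_1$, and you write out the $y_i$ case and the $m=1$ edge case explicitly, where the paper just calls them symmetric.
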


\begin{proof}
Suppose first that $x_1,\dots,x_m$ are right linearly dependent. After relabelling, there is a relation $$\sum_{i=1}^m x_i d_i=0 \text{ with } d_1\neq 0. $$ Since $D$ is a division algebra, $d_1$ is invertible, so $$x_1=-\sum_{i=2}^m x_i d_i d_1^{-1}, \text{ and hence } q=\sum_{i=2}^m x_i\tensor (y_i-d_i d_1^{-1}y_1). $$ 
This contradicts the minimality of $m$. The argument for the left $D$-linear independence of $y_i$'s is symmetric. 
\end{proof}

Finally, we are ready to prove the semisimplicity theorem. 

\begin{theorem}\label{thm:main-semisimplicity}
    Let $A$ be an abelian variety over a number field $K$. Then the $\Gal_K$-action on $V_\ell A$ is semisimple. 
\end{theorem}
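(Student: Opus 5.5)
We reduce to the criterion of \cref{cor: relative position}. Fix a place $v$ of good reduction, unramified over $p \neq 2,\ell$, with residue field $k$, and set $D,B,D_\ell,B_\ell$ as there. Let $W_\ell\subseteq V_\ell$ be $\Gal_K$-invariant. We must show $L_{B_\ell}(W_\ell)=B_\ell\cdot L_{D_\ell}(W_\ell)$. The inclusion $\supseteq$ is trivial, so suppose it is strict.

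First we rationalize the discrepancy, as in the proof of \cref{thm: Faltings hom}. Both sides are left $B_\ell$-ideals. So is $B_\ell\cdot L_{D_\ell}(W_\ell)$, which is generated by the rational-over-$D$ ideal $L_{D_\ell}(W_\ell)$. We also expect the following: if a left ideal $J\subseteq D_\ell$ is generated by elements of $L_{D_\ell}(W_\ell)$ and $B_\ell J$ happens to be $B$-rational (i.e.\ $=(B_\ell J\cap B)\tensor\IQ_\ell$), then $J$ is $D$-rational. The point is that the ``$K$-rational'' elements of $B_\ell$ are exactly $D_\ell$, by \cref{thm: Faltings hom}. Granting this, a strict inclusion yields an element $e\in L_{B_\ell}(W_\ell)$, which by \cref{prop: gen'd by idempotents} we may take idempotent with $\ker e=W_\ell$. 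Then $e\notin B_\ell\cdot L_{D_\ell}(W_\ell)$.

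To make the irrationality concrete we use $B\tensor_D$-type expansions. Reducing to $A$ isotypic, possibly after passing to the simple factors of $D$, we view $B$ as a right $D$-module. Using \cref{lem: minimal tensor expression} (the noncommutative minimal support trick), we write the relevant element modulo $B_\ell L_{D_\ell}(W_\ell)$ as $\sum_{i=1}^m g_i c_i$. Here the $g_i\in B$ (scaled into $\End_k(A_0)$) are right $D$-independent modulo $D$, and the $c_i\in D_\ell$ (scaled into $\End_K(A)\tensor\IZ_\ell$) are left $D$-linearly independent, with $m\ge 1$ chosen minimal.

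The diagonal trick then runs as follows. Let $H$ be the $\ell$-divisible $\Gal_K$-stable group $W_\ell/(W_\ell\cap T_\ell)$ inside $A[\ell^\infty]$, and consider
$$ \{(c_1x,\dots,c_mx): x\in A[\ell^\infty]\}\subseteq A^m, $$
which is Zariski dense by \cref{lem: irrationality}. We combine it with the $\Gal_K$-invariant subgroup $\{(c_1x,\dots,c_mx):x\in H\}$, whose reductions satisfy $\sum g_i(c_ix)=e(x)+(\text{terms killing }H)=0$. The Zariski closure $C$ of the latter is an abelian subvariety of $A^m$. By \cref{prop: tube absorbance}, $C_0\subseteq\ker(G_0)$ with $G_0=\sum g_i$. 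Using \cref{lem: produce idempotent}, $\ker(L_{D_\ell}(W_\ell))=D_\ell'W_\ell$. Since $C$ is $D$-stable in the appropriate sense, we get $C\supseteq$ the closure of $\{(c_ix)\}_{x\in D_\ell' H}$, and so $\ker(G_0)$ contains the reduction of an abelian subvariety whose Tate module is $(c_i)(D'_\ell W_\ell)$. Comparing dimensions as in \cref{thm: Faltings hom}, we aim to show that $\sum g_i c_i$ kills $D'_\ell W_\ell=\ker L_{D_\ell}(W_\ell)$, hence lies in $B_\ell L_{D_\ell}(W_\ell)$ by \cref{prop: gen'd by idempotents}. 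This contradicts the choice.

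The main obstacle is this final step. From the abelian subvariety $C\subseteq A^m$ with $C_0\subseteq\ker G_0$, we must extract that $G_0$ vanishes on all of $(c_i)(D'_\ell W_\ell)$, not just on the reductions of $H$. The plan is to use that the Zariski closure of the orbit under $\End_K(A)$ of $H$ has Tate module $D_\ell\cdot$ of the original, hence is controlled by $D'_\ell W_\ell$ after applying Faltings' Hom theorem to $C$ itself: $V_\ell C$ is a $\Gal_K$-submodule cut out by $\Hom_K$, so $T_\ell C=\ker$ of $K$-rational homomorphisms, which must contain $D'_\ell W_\ell$. Making this precise, and ensuring the minimal-$m$ bookkeeping keeps $c_i$ left $D$-independent, is where the real work lies.
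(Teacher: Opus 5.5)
As written, this is not yet a proof. The step you flag as ``the main obstacle'' is the decisive one, and you leave it open. Several surrounding claims are also unneeded or inaccurate:
\begin{enumerate}[(i)]
\item The ``rationalization'' statement you grant is never used. Once the inclusion is strict, the idempotent $e\in L^\perp_{B_\ell}(W_\ell)$ supplied by Tate's theorem automatically lies outside $B_\ell L_{D_\ell}(W_\ell)$, by \cref{prop: gen'd by idempotents}.
\item The Zariski density of $\{(c_ix):x\in A[\ell^\infty]\}$ is irrelevant, since only the $H$-diagonal reduces into $\ker G_0$. In the paper, what is needed is left $D$-independence of the classes $\bar c_i\in D_\ell/L_{D_\ell}(W)$, not of the $c_i$ themselves; that is what makes the $H$-diagonal dense.
\item ``$C$ is $D$-stable'' is not true in general. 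For example, a graph $\{(y,fy)\}$ is diagonally $d$-stable only if $df=fd$.
\item Reducing only to $A$ isotypic does not make $D$ a division algebra, which \cref{lem: minimal tensor expression} requires.
\end{enumerate}

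The idea in your last paragraph does close the gap, and it makes most of your set-up unnecessary. Since $H$ is $\Gal_K$-stable, $C=\overline{H}^{\Zar}$ is defined over $K$. Poincar\'e reducibility over $K$ then gives $\phi\in\End_K(A^m)\tensor\IQ=M_m(D)$ with $V_\ell C=\ker\phi$; no Hom theorem is needed here. The diagonal action of $D'_\ell$ on $V^m$ commutes with $\phi$ and with the $c_i\in D_\ell$. So from $(c_iw)_i\in V_\ell C$ for $w\in W$ you get $(c_ix)_i\in V_\ell C=V_\ell C_0$ for all $x\in D'_\ell W$.

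Now \cref{prop: tube absorbance} gives $C_0\subseteq\ker G_0$, so $q'$ kills $D'_\ell W$. This space equals $\ker d$ for an idempotent $d$ with $L_{D_\ell}(W)=D_\ell d$ (by \cref{prop: gen'd by idempotents} and \cref{lem: produce idempotent}). The second half of \cref{prop: gen'd by idempotents}, applied in $B_\ell$, gives $L_{B_\ell}(\ker d)=B_\ell d=B_\ell L_{D_\ell}(W)$. Hence $q'$, and therefore $q$, lies in $B_\ell L_{D_\ell}(W)$, which is the contradiction you wanted.

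Completed this way, the argument uses no minimality, no independence of the $g_i$ or $c_i$, no simplicity of $A$, and no \cref{lem: irrationality}. You may expand $q$ itself as $\sum_j\lambda_jg_j$ over a $\IZ$-basis of $\End_k(A_0)$ with $\lambda_j\in\IZ_\ell$. Applying the argument to $e\in L^\perp_{B_\ell}(W)$ even gives $D'_\ell W\subseteq\ker e=W$ directly, so $L^\perp_{D_\ell}(W)\neq\emptyset$ follows without \cref{lem: relative position}.

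This is a genuinely different route from the paper's. The paper reduces to $A$ simple and uses $B_\ell/B_\ell L_{D_\ell}(W)\cong B\tensor_D(D_\ell/L_{D_\ell}(W))$ with \cref{lem: minimal tensor expression} to get right-independent $g_i$ and left-independent $\bar c_i$. It then forces $\overline{H}^{\Zar}=A^m$ by a variant of \cref{lem: irrationality} and contradicts $\sum g_i\neq0$. The paper's version mirrors the Hom-theorem proof through full Zariski density. Yours replaces density by the stability of $V_\ell C$ under the commutant $D'_\ell$, and is shorter and more general.
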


\begin{proof}
    Again by standard reductions, it suffices to consider the case when $A$ is simple. 
    We keep using the set-up in \cref{cor: relative position}, except that below we simply write $V, W$ for $V_\ell, W_\ell$. Note that the simplicity of $A$ implies that $D$ is a division algebra.  Our goal is to show that for every $\Gal_K$-invariant subspace $W \subseteq V$, 
    \begin{equation*}
        L_{B_\ell} (W) = B_\ell \cdot L_{D_\ell}(W). 
    \end{equation*}
    That is, the annhilator of $W$ in $B_\ell$ is generated as a left $B_\ell$-module by the annhilator of $W$ in $D_\ell$. 
    
    Let us first note that as $B_\ell = B \tensor_D D_\ell$, there is an identification 
    \begin{equation*}
        B_\ell / [B_\ell \cdot L_{D_\ell}(W)] = B \tensor_D [D_\ell / L_{D_\ell}(W)].
    \end{equation*}
    Here $B$ is regarded as a right $D$-module and $D_\ell/L_{D_\ell}(W)$ as a left $D$-module.

    Clearly, $B_\ell \cdot L_{D_\ell}(W) \subseteq L_{B_\ell}(W)$. Suppose now that it is not an equality. Then we can choose some $q \in L_{B_\ell}(W) \smallsetminus B_\ell \cdot L_{D_\ell}(W)$. Write $\bar{q}$ for the image of $q$ in the quotient $B_\ell / [B_\ell \cdot L_{D_\ell}(W)]$, and find an expression
    $$ \bar{q} = \sum_{i = 1}^m g_i \tensor \bar{c}_i \text{ for } g_i \in B \text{ and } \bar{c}_i \in D_\ell/L_{D_\ell}(W) $$
    that minimizes $m$. Then by \cref{lem: minimal tensor expression}, $g_i$'s are right $D$-linearly independent and $\bar{c}_i$'s are left $D$-linearly independent. Choose a lift $c_i \in D_\ell$ for each $\bar{c}_i$. Then we obtain a lift $$q' = \sum_{i = 1}^m g_i c_i$$ for $\bar{q}$, so that $\delta := q - q' \in B \tensor_D L_{D_\ell}(W) = B_\ell \cdot L_{D_\ell}(W)$. Clearly, as $q$ and $\delta$ both kill $W$, so does $q'$. 

    By scaling the relation if needed, we may arrange $g_i\in\End_k(A_0)$ to be actual endomorphisms and $c_i\in\End_K(A)\tensor\IZ_\ell$, so they act on $A[\ell^\infty]$. Define $H_W = W / (W \cap T_\ell A_0)$ to be the $\ell$-divisible group given by $W$. Note that it is $\Gal_K$-invariant. Now we apply the diagonal trick again and consider 
    $$ H = \{ (c_1 x, \cdots, c_m x) : x \in H_W \} \subseteq A^m [\ell^\infty]. $$

    Here we need a little adaptation of \cref{lem: irrationality}. By construction $c_i$'s remain left $D$-linearly independent modulo $L_{D_\ell}(W)$. If $\overline{H}^\Zar \neq A^m$, there exist morphisms $d_i : A \to A$, not all zero, such that 
    $$ \overline{H}^\Zar \subseteq \ker \big( \sum_{i = 1}^m d_i : A^m \to A \big) \Rightarrow (\sum_{i = 1}^m d_i c_i) W=0 \Rightarrow \sum_{i = 1}^m d_i \bar{c}_i = 0, $$
    which is a contradiction. Hence $\overline{H}^\Zar = A^m$. On the other hand, by \cref{prop: tube absorbance}, the reduction of $\overline{H}^\Zar$ has to lie in the kernel of the map 
    $$ \sum_{i = 1}^m g_i : A_0^m \to A_0,  $$
    as $H$ does. But the $g_i$'s are right $D$-linearly independent, the above map is nonzero, which is absurd.  
\end{proof}

\vspace{1em}

\paragraph{Acknowledgments} This project began during the ``Fifth National Conference on Algebraic Geometry'' held at Peking University, where the second author was an invited speaker and the first author explained his toy example (\cref{thm: Junyi over C}) to the second author. We thank Peking University for its hospitality. We also thank Ziyang Gao, who joined our initial discussion, and Shouwu Zhang for asking whether Faltings' isogeny theorem could be proved via equidistribution. We are also grateful to Xinyi Yuan for many valuable discussions and for his help in checking our proofs. 

The first author is supported by NSFC Grant
(No.12271007) and by the Xplorer
Prize from the New Cornerstone Science Foundation.
The second author is supported by CUHK start up grant, and an ECS grant from the Hong Kong RGC (Project No. 24308225). 

\medskip

\paragraph{AI Usage} The authors used ChatGPT 5.6 Sol to assist with semisimple algebra. Specifically, we prompted AI with our proof of the Hom theorem, and asked it to search for a relation between endomorphisms algebras so that the diagonal trick still applies. The idea of considering the relation in \cref{cor: relative position} is suggested by AI. The manuscript is written by the authors, who are responsible for its correctness. 

\medskip

\paragraph{Afterword} 
After the main text was posted to arXiv, Jit Wu Yap and Matt H. Baker independently used GPT-6 Astra to explore whether our method could be extended to prove \cref{thm:app-isogeny-finiteness} below and sent us an initial draft. We thank them for their interest in our work. We decided to formalize and include the argument as an appendix, as it helps complete the narrative leading to Mordell. We verified the proof with Yap and thoroughly revised the exposition to match the style of the main text.

\appendix
\section{Finiteness in an isogeny class}
\label{sec:appendix}
\begin{center}
\textit{Joint with Jit Wu Yap}
\end{center}

This appendix extends the method of the main text to the following classical finiteness theorem of Faltings \cite{Fal83}. Given \cref{cor: isogenous}, standard arguments with Galois representations show that, for any fixed finite set $S$ of finite places of $K$, there are only finitely many $K$-isogeny classes of abelian varieties over $K$ of a fixed dimension and with good reduction outside $S$. Combining this with \cref{thm:app-isogeny-finiteness} solves the Shafarevich problem for abelian varieties, and hence, via the Shafarevich–Parshin reduction, the Mordell conjecture.
Therefore, the method of the main text gives yet another route to the Mordell conjecture, after the alternative proofs of Vojta \cite{Voj91}, Masser--W{\"u}stholz \cite{MW93}, and Lawrence--Venkatesh \cite{LV20}.

\begin{theorem}\label{thm:app-isogeny-finiteness}
Let $K$ be a number field and let $A$ be an abelian variety over $K$. There are only finitely many $K$-isomorphism classes of abelian varieties over $K$ that are $K$-isogenous to $A$.
\end{theorem}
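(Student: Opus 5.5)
Following the abstract, I would use an ultraproduct variant of the diagonal trick, aiming for a contradiction. Suppose there are infinitely many pairwise non-isomorphic $A_n$ that are $K$-isogenous to $A$. By the standard reductions (Poincaré reducibility, and passing to the dual isogeny) I may assume each $A_n$ is a quotient $A/G_n$ by a finite subgroup scheme $G_n$. Since $\End_K(A)\tensor\hat{\IZ}$ and the Galois action on $T_\ell A$ are both understood through \cref{thm: main}, the next reduction is to isogenies of $\ell$-power degree for a single prime $\ell$. The prime-to-$\ell$ part is handled by a Tate-module/lattice count: for $\ell$ large, semisimplicity together with \cref{thm: Faltings hom} gives $\End_{\Gal_K}(A[\ell])=\End(A)\tensor\IF_\ell$, so every Galois-stable subgroup of $A[\ell]$ is the kernel of an element of $\End(A)\tensor \IF_\ell$, and the resulting quotient is isomorphic to $A$ up to finitely many possibilities. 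This step is classical and essentially linear algebra given \cref{thm: main}.

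For a fixed $\ell$, the kernels $G_n\subseteq A[\ell^\infty]$ correspond to Galois-stable lattices $\Lambda_n\supseteq T_\ell A$ in $V_\ell A$. I would pass to a non-principal ultrafilter and take the limit, which is a $\Gal_K$-stable "limit subspace". Concretely, after rescaling the lattices $\Lambda_n$ by powers of $\ell$ and using compactness of the space of lattices up to homothety (the Bruhat--Tits building), either infinitely many $\Lambda_n$ lie in finitely many $\End(A)\tensor\IZ_\ell$-orbits, or the rescaled lattices degenerate. Degeneration means they converge, along the ultrafilter, to the $\IZ_\ell$-span of a proper nonzero Galois-invariant subspace $W\subseteq V_\ell A$, with the $\ell$-divisible group $H_W$ being the limit of the $G_n$.

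The key step, and the main obstacle, is to exclude or exploit this degeneration. By \cref{thm:main-semisimplicity}, $W$ has a Galois-stable complement. By \cref{thm: Faltings hom}, the projection onto $W$ lies in $\End_K(A)\tensor\IQ_\ell$, so $W$ comes from an abelian subvariety up to an $\ell$-adic, possibly irrational, combination. Here I would rerun the diagonal argument of \cref{thm:main-semisimplicity} and \cref{prop: tube absorbance} at a good place $v$. The idea is to show that the lattices $\Lambda_n$ eventually lie in a single orbit under $(\End_K(A)\tensor\IZ_\ell)^\times$ times homotheties. If $\Lambda_n=u\Lambda_m$ with $u$ a unit in $\End(A)\tensor\IZ_\ell$, I approximate $u$ by a global endomorphism $u'\in\End_K(A)$ congruent to $u$ modulo a high power of $\ell$. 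Then $u'$ induces an isomorphism $A/G_m\cong A/G_n$, contradicting non-isomorphism.

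The hardest part is controlling the orbits in the degenerate case. To do this I would use the ultraproduct of the $H_{\Lambda_n}$ to produce an $\ell$-divisible Galois-stable subgroup. The Zariski-closure/tube argument then forces it to be the graph of an actual homomorphism, exactly as in \cref{thm: Faltings hom}. That homomorphism shows the $\Lambda_n$ differ by elements of $\End_K(A)\tensor\IZ_\ell$, and this is the step I expect to require the genuinely new input.
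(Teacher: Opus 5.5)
Your proposal has genuine gaps. They sit exactly where the paper has to supply new arguments: uniformity in $\ell$, and the passage from local lattice data to global isomorphism classes.

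\textbf{Uniformity in $\ell$.} The step you call classical and ``essentially linear algebra given \cref{thm: main}'' does not follow from \cref{thm: main}. That theorem is a statement about $T_\ell A$ for each $\ell$ separately and gives no uniformity in $\ell$. Semisimplicity of $V_\ell A$ says nothing about $A[\ell]$, and $\End_{\Gal_K}(T_\ell A)\to\End_{\Gal_K}(A[\ell])$ need not be surjective. The claims you need are that, for all but finitely many $\ell$, $A[\ell]$ is semisimple and $\End_K(A)\tensor\IF_\ell\to\End_{\Gal_K}(A[\ell])$ is an isomorphism. These are precisely \cref{thm:app-residual-hom,thm:app-residual-semisimplicity}, and they are the main new content of the appendix.

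Proving them is where the ultraproduct is really needed, and it is taken over an infinite set of varying primes $\ell$. Then $F=\prod_\sU\IF_\ell$ has characteristic zero, so the diagonal trick can be run with $\IQ$-linearly independent $c_i\in F$. The inputs are Zarhin's residual version of Tate's theorem over the residue field and the tube argument for torsion of varying order (\cref{prop:app-tube}).

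Your ultraproduct over a sequence of lattices at a fixed $\ell$ addresses the wrong problem. At a fixed $\ell$, once \cref{thm: main} is known, finiteness of $(D\tensor\IQ_\ell)^\times$-orbits of stable lattices is pure algebra. This is \cref{lem:app-local-lattices}: take a maximal order $S_\ell$ containing the span of Galois, note that all $S_\ell$-lattices are isomorphic, and sandwich each stable $L$ between $\ell^aS_\ell L$ and $S_\ell L$. No equidistribution or degeneration analysis is needed there.

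\textbf{From local orbits to isomorphism classes.} Your concluding step is false. Suppose $u'\in\End_K(A)$ is congruent to $u\in(\End_K(A)\tensor\IZ_\ell)^\times$ modulo a high power of $\ell$. Then indeed $u'\Lambda_m=\Lambda_n$, but the induced map $A/G_m\to A/G_n$ is an isogeny of degree $\deg u'$, prime to $\ell$ but in general $>1$.

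Concretely, take $A\colon y^2=x^3-x$, $\ell=3$, and $K\supseteq\IQ(i,A[3])$. The four order-$3$ subgroups of $A[3]$ give lattices in a single $\IZ_3[i]^\times$-orbit. Yet the quotients have CM by $\IZ[3i]$, which has class number $2$, so they are not all isomorphic.

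Conversely, the ``single orbit'' you aim for cannot be forced. Let $A$ have CM by $\sO_F$ with $F\subseteq K$, and let $\ell=\lambda\bar\lambda$ split in $F$. The lattices of $A/A[\lambda^a]$, $a\ge0$, form a degenerating Galois-stable family. It lies in infinitely many orbits under $(\End_K(A)\tensor\IZ_\ell)^\times\cdot\IQ_\ell^\times$, while the quotients fall into finitely many isomorphism classes only because the class group is finite.

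Your ``reduction to a single prime $\ell$'' has a related problem. Isogeny degrees may involve unboundedly many primes. Even granting the residual theorems, ``isomorphic to $A$ up to finitely many possibilities'' hides the same class-number issue; for example, $A/A[\lambda]$ lies in the ideal class of $\lambda$.

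The missing global ingredient is class-number finiteness, organised as in \cref{thm:app-lattice-finiteness}:
\begin{enumerate}
\item Record each $B$ by the adelic family $(f_B(T_\ell B))_\ell$, and observe that $B\cong C$ if and only if the two families differ by an element of $D^\times$.
\item Get finitely many $D_f^\times$-orbits from local finiteness at every $\ell$, together with a single orbit at almost all $\ell$; this second part is where the residual theorems enter.
\item Conclude with Borel's finiteness of $D^\times\backslash D_f^\times/U$.
\end{enumerate}
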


\subsection{Varying coefficients}\label{sec:01-residual} In this subsection, we use ultraproducts to prove the residual Hom theorem and semisimplicity for all sufficiently large primes.

\medskip

\paragraph{Integral actions and ultraproducts.}  For background on ultrafilters and ultraproducts, see \cite[Chapter~2]{Sch10}. Let $\Lambda$ be an infinite set of primes and fix a nonprincipal ultrafilter $\sU$ on $\Lambda$; call its members \emph{large}. Removing finitely many elements from a large set preserves largeness, and every finite partition of $\Lambda$ has exactly one large part. The ultraproduct
\[
F:=\prod_{\sU}\IF_\ell=\Big(\prod_{\ell\in\Lambda}\IF_\ell\Big)/\sim_{\sU}
\]
is a field: a nonzero class can be inverted on a large set. Here ``$\sim_\sU$'' means that two sequences are equivalent if and only if they agree on a large set.  Since every nonzero integer has only finitely many prime divisors, $F$ has characteristic zero and contains $\IQ$. We write its elements as $\{a_\ell\}_\sU$.

For every finite free $\IZ$-module $M$, there is a canonical identification
\begin{equation}\label{eq:residual-ultraproduct-coordinates}
M\tensor_\IZ F\xrightarrow{\sim}\prod_\sU(M/\ell M),\qquad m\tensor\{a_\ell\}_\sU\longmapsto\{m\tensor a_\ell\}_\sU.
\end{equation}
If $M$ is a ring, this identification preserves multiplication, since its multiplication table has fixed integral structure constants.

A collection $\{V_\ell\}_{\ell\in\Lambda}$ of $\IF_\ell$-vector spaces of uniformly bounded dimension has an ultraproduct $V:=\prod_\sU V_\ell$, whose dimension is the unique $r$ such that $\{\ell:\dim V_\ell=r\}$ is large. We freely use {\L}o\'s theorem in the form \cite[Thms.~2.3.1--2.3.2]{Sch10}: after choosing bases, finite systems of polynomial equations and inequations in matrix entries may be checked in the ultraproduct, or equivalently on a large set. In particular, this applies to polynomial identities, ranks, and kernels of maps between vector spaces of fixed dimension.

\paragraph{The varying-prime tube argument.} The proof of \cref{prop: tube absorbance} also gives the following version for torsion of varying prime-to-$p$ order.

\begin{proposition}\label{prop:app-tube}
Let $X,K,v,k$ be as in \cref{prop: tube absorbance}, with $p:=\operatorname{char}k$, and let $H\subseteq X(\bar K)$ be a $\Gal_K$-invariant subgroup of prime-to-$p$ torsion points. Set $C:=\overline H^\Zar$. If $Z_0\subseteq X_0$ is closed and $H\subseteq]Z_0[$, then $C^0\subseteq]Z_0[$. Moreover, for every prime $\ell\nmid|\pi_0(C_{\bar K})|$, one has $H\cap X[\ell]\subseteq C^0$.
\end{proposition}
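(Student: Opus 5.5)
We argue as in \cref{prop: tube absorbance}, but without $\ell$-divisibility $C$ may be disconnected, so everything is phrased in terms of the identity component. First, $C=\overline H^\Zar$ is a closed algebraic subgroup of $X$, being the Zariski closure of a subgroup. It is stable under $\Gal_K$ and hence defined over $K$, and so is its identity component $C^0$, which is an abelian subvariety. Let $N:=|\pi_0(C_{\bar K})|$. Then $H^0:=H\cap C^0(\bar K)$ has index at most $N$ in $H$, and it is Zariski dense in $C^0$: the cosets of $H^0$ are distributed among the finitely many components of $C$, and their closures cover $C$. Also $H^0$ is $\Gal_K$-invariant, because $C^0$ is defined over $K$. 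As in \cref{prop: tube absorbance}, $C^0$ has good reduction at $v$ by N\'eron--Ogg--Shafarevich; let $C^0_0$ be its special fiber.

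To show $C^0\subseteq]Z_0[$, i.e.\ that $C^0_0\to X_0$ factors through $Z_0$, we rerun the argument of \cref{prop: tube absorbance} on $C^0$. Suppose it does not. Choose $s\in\H^0(\sL^{\tensor n})$ vanishing on $Z_0$ but not on the image of $C^0_0$, and set $f=\min\{-\log\|s\|_v,1\}$. Pick a generic sequence $\{x_n\}\subseteq H^0$ in $C^0$; it exists because $H^0$ is Zariski dense in the irreducible variety $C^0$. Torsion points have canonical height $0=h(C^0)$, so Yuan's theorem applies on $C^0$ with the restricted canonical metric. Therefore the Galois averages of $f$ tend to $f(\xi_{C^0})=0$.

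On the other hand, prime-to-$p$ torsion points are defined over extensions unramified at $v$, so each point of $O(x_n)\subseteq H\subseteq]Z_0[$ reduces into $Z_0$. This gives $\|s(x)\|_v\le q_v^{-1}$, hence $f(x)\ge\min\{\log q_v,1\}>0$ for all such $x$, a contradiction. The only new input compared with \cref{prop: tube absorbance} is checking that unramifiedness still holds for prime-to-$p$ torsion of arbitrary order; this is the N\'eron--Ogg--Shafarevich criterion applied to $X[n]$ for $p\nmid n$.

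For the final claim, let $x\in H\cap X[\ell]$ with $\ell\nmid N$. The image of $x$ in the finite group $C/C^0=\pi_0(C_{\bar K})$ has order dividing both $\ell$ and $N$, hence is trivial. Thus $x\in C^0$. This step is pure group theory.

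The main obstacle is justifying Zariski density of $H^0$ in $C^0$, which is what makes a generic sequence exist. Density holds because $C$ is the finite union of the closures of the cosets $h+H^0$, each contained in a single component. The component $C^0$ must therefore be the closure of the coset lying in it, and that coset is $H^0$ itself.
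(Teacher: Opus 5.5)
Your proposal is correct and is essentially the paper's proof. The paper likewise notes that $H\cap C^0$ is Zariski dense in $C^0$, reruns the argument of \cref{prop: tube absorbance} verbatim on a generic torsion sequence in $H\cap C^0$, and gets the last assertion because the image of $H\cap X[\ell]$ in $\pi_0(C_{\bar K})$ is $\ell$-torsion. You supply details the paper leaves implicit: the density argument via cosets, unramifiedness of prime-to-$p$ torsion, and smallness of the sequence.
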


\begin{proof}
The subgroup $H\cap C^0$ is Zariski dense in $C^0$, so the proof of \cref{prop: tube absorbance} applies verbatim to a generic torsion sequence in $H\cap C^0$. For the last assertion, the image of $H\cap X[\ell]$ in $\pi_0(C_{\bar K})$ is $\ell$-torsion, hence is trivial when $\ell\nmid|\pi_0(C_{\bar K})|$.
\end{proof}

\paragraph{The residual Hom theorem.} The following is the residual analogue of \cref{thm: Faltings hom}.

\begin{theorem}\label{thm:app-residual-hom}
Let $A,B$ be abelian varieties over a number field $K$. For all but finitely many primes $\ell$, the natural map $\Hom_K(A,B)\tensor\IF_\ell\to\Hom_{\Gal_K}(A[\ell],B[\ell])$ is an isomorphism.
\end{theorem}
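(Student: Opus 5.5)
The plan is to run the proof of \cref{thm: Faltings hom} again, replacing $\IQ_\ell$ by the ultraproduct field $F=\prod_\sU \IF_\ell$.

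\textbf{Setup.} Injectivity holds for every $\ell$: the kernel of $\Hom_K(A,B)\tensor\IF_\ell\to\Hom(A[\ell],B[\ell])$ consists of classes of $f$ with $f|_{A[\ell]}=0$, and any such $f$ factors as $f=\ell g$.

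For surjectivity, suppose the map fails to be surjective for an infinite set $\Lambda$ of primes. Fix a nonprincipal ultrafilter $\sU$ on $\Lambda$, and set $F=\prod_\sU\IF_\ell$.

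Choose an unramified place $v$ of $K$ with $p=\mathrm{char}\,k\neq 2$, at which $A$ and $B$ have good reduction, and discard $p$ from $\Lambda$.

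As the finite-field input, I will use the residual form of Tate's theorem, due to Tate and Zarhin:
\[
\Hom_k(A_0,B_0)\tensor\IF_\ell \xrightarrow{\sim} \Hom_{\Gal_k}(A_0[\ell],B_0[\ell]) \quad\text{for all but finitely many } \ell .
\]
This is the one external input beyond what is stated earlier. It replaces Tate's theorem in the main text.

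\textbf{The ultraproduct space and a bad element.} Set
\[
W := \prod_\sU \Hom_{\Gal_K}(A[\ell],B[\ell]).
\]
By the residual Tate theorem, \eqref{eq:residual-ultraproduct-coordinates} and {\L}o\'s's theorem, $W$ is an $F$-subspace of $\Hom_k(A_0,B_0)\tensor F$. It contains $\Hom_K(A,B)\tensor F$. The inclusion is strict, because the ranks differ on a large set.

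The proof of \cref{lem: find basis} works verbatim for any field extension $F\supseteq\IQ$ in place of $\IQ_\ell$. It gives a basis $\{h_i\}\cup\{g_j\}$ of $\Hom_k(A_0,B_0)_\IQ$, with $\{h_i\}$ a basis of $\Hom_K(A,B)_\IQ$. It also gives an element $\sum_{i=1}^m c_i g_i\in W$ with $m\ge2$ and $c_i\in F$ linearly independent over $\IQ$.

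Clear denominators so that each $g_i\in\Hom_k(A_0,B_0)$. Write $c_i=\{c_{i,\ell}\}_\sU$ with $c_{i,\ell}\in\IZ$. Then there is a large set $S\subseteq\Lambda$ on which
\[
\phi_\ell:=\sum_i c_{i,\ell}\, g_i \bmod \ell
\]
is $\Gal_K$-equivariant on $A[\ell]$.

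\textbf{The diagonal trick.} Let $H\subseteq (A^m\times B)(\bar K)$ be the subgroup generated by the sets
\[
\{(c_{1,\ell}x,\dots,c_{m,\ell}x,\phi_\ell(x)) : x\in A[\ell]\}, \qquad \ell\in S .
\]
It is $\Gal_K$-invariant and consists of prime-to-$p$ torsion. Let $G_0=\sum_i g_i : A_0^m\to B_0$. The reduction of each generator lies in $\Gamma_{G_0}$, so $H\subseteq\,]\Gamma_{G_0}[$. Put $C=\overline H^\Zar$. \Cref{prop:app-tube} then gives $C^0\subseteq\,]\Gamma_{G_0}[$, hence $\dim C^0\le m\dim A$.

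\textbf{Surjectivity onto $A^m$.} Next I need the residual analogue of \cref{lem: irrationality}: the projection $C^0\to A^m$ is surjective. If it were not, there would be $f_1,\dots,f_m\in\End_K(A)$, not all zero, such that $\sum_i f_i(y_i)$ vanishes on the image of $C^0$. For $\ell\in S$ with $\ell\nmid|\pi_0(C_{\bar K})|$, \cref{prop:app-tube} puts the $\ell$-part of $H$ inside $C^0$. Hence
\[
\sum_i f_i\, c_{i,\ell}=0 \ \text{ in } \End(A)/\ell \ \text{ on a large set.}
\]
By {\L}o\'s, $\sum_i f_i\tensor c_i=0$ in $\End(A)\tensor F$. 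Since the $c_i$ are $\IQ$-linearly independent and $\End(A)$ is free over $\IZ$, this forces every $f_i=0$, a contradiction.

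\textbf{Conclusion.} Now $\dim C^0=m\dim A$. By \cite[\S7.5~Thm.~4]{BLR90}, using $e=1<p-1$, the reduction $C^0_0$ of $C^0$ is a closed abelian subvariety of $\Gamma_{G_0}$ of the same dimension, so $C^0_0=\Gamma_{G_0}$. As in the proof of \cref{thm: Faltings hom}, $C^0$ is then the graph of a homomorphism $G=\sum_i\wt g_i$ with each $\wt g_i$ lifting $g_i$. But the $g_i$ were chosen not to lift, a contradiction.

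\textbf{Expected obstacle.} I expect the hardest part to be the bookkeeping with varying $\ell$. This includes keeping $H$ a genuine Galois-stable group and handling the component group $\pi_0(C_{\bar K})$, which \cref{prop:app-tube} is designed for. The other delicate point is justifying the residual finite-field Tate theorem as the input.
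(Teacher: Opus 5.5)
Your route is essentially the paper's: Zarhin's residual form of Tate's theorem at an unramified good place with $p\neq 2$, the ultraproduct $F=\prod_\sU\IF_\ell$, and a minimal-support expression $\sum_i c_i g_i$ with $\IQ$-independent $c_i\in F$. You then take the diagonal group $H$ generated by the $H_\ell$, use \cref{prop:app-tube} to control $C^0$ and the $\ell$-parts of $H$, and apply \cite[\S7.5, Thm.~4]{BLR90} to see that $C^0$ is the graph of a lift. Two of your choices are fine: the explicit injectivity argument, and dropping the reduction to simple $A$. With scalar $c_i$, Poincar\'e reducibility still gives a nonzero $\phi\colon A^m\to A$ killing any proper abelian subvariety.

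One step does not follow as written, namely your application of \cref{lem: find basis}. Put $V:=\Hom_k(A_0,B_0)_\IQ$. The lemma needs the hypothesis $W\neq (W\cap V)\tensor F$, and it produces a basis of $W\cap V$, not of $\Hom_K(A,B)_\IQ$. You have only shown the weaker fact $W\supsetneq \Hom_K(A,B)\tensor F$. In the main text, the identification $W\cap V=\Hom_K(A,B)_\IQ$ came from the Cartesian square, i.e.\ from \cref{thm: Junyi in char p}. You have no residual analogue of it, so one case is not excluded: $W=(W\cap V)\tensor F$ with $W\cap V\supsetneq \Hom_K(A,B)_\IQ$. Concretely, this would be a homomorphism $g\colon A_0\to B_0$ that is $\Gal_K$-equivariant mod $\ell$ on a large set of $\ell$ but does not lift. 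In that case the lemma gives you no irrational element and the argument stalls.

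The fix is cheap, in either of two ways.
\begin{enumerate}
\item Your diagonal argument never uses $m\ge 2$. Running it with $m=1$ and $c_1=1$ shows that such a $g$ lifts, which gives $W\cap V=\Hom_K(A,B)_\IQ$, and then your application of the lemma goes through.
\item Do what the paper does. Pick a single $b\in W\smallsetminus \Hom_K(A,B)\tensor F$, and subtract its component in $\Hom_K(A,B)\tensor F$ with respect to a complement $E$ of $\Hom_K(A,B)_\IQ$ in $V$. Then apply minimal support inside $E\tensor F$, allowing $m\ge 1$. The resulting $g_i\in E$ automatically lie outside $\Hom_K(A,B)_\IQ$, and no separate rationality statement is needed.
\end{enumerate}
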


\begin{proof}
We follow the proof of \cref{thm: Faltings hom}. By standard reductions, we assume that $A$ is $K$-simple. Choose a common good-reduction place $v$ above an odd prime $p$ unramified in $K$. Write $k$ for the residue field, $A_0, B_0$ for the special fibers, $M:=\Hom_K(A,B)$, and $L:=\Hom_k(A_0,B_0)$.  View $M\subseteq L$ by specialization. Note that the torsion subgroup of $L/M$ is $p$-primary (e.g., by \cite[Thm.~3.15]{BO83}). Hence, for every $\ell\neq p$, $M/\ell M$ injects to $L/\ell L$, and we view it as subspace. 

Suppose the residual Hom map fails to be surjective for an infinite set $\Lambda$. After removing $p$ and finitely many further primes from $\Lambda$, we assume that for every $\ell \in \Lambda$, $A_0[\ell]$ is a semisimple $\Gal_k$-module and $\Hom(A_0, B_0) \to \Hom_{\Gal_k}(A_0[\ell], B_0[\ell])$ is surjective (\cite{Zar77}). For each $\ell\in\Lambda$, choose $\psi_\ell\in\Hom_{\Gal_K}(A[\ell],B[\ell])$ outside $M/\ell M$. By assumption, $\psi_\ell=b_\ell|_{A[\ell]}$ for some $b_\ell\in L/\ell L$. Choose a nonprincipal ultrafilter $\sU$ on $\Lambda$, put $F:=\prod_\sU\IF_\ell$, and let $b:=\{b_\ell\}_\sU\in L\tensor_\IZ F$.  Since $b_\ell\notin M/\ell M$ for every $\ell\in\Lambda$, it follows that
$b\notin M\tensor_\IZ F$.

After subtracting an element of $M\tensor_\IZ F$ from $b$, and replacing $\psi_\ell$ and $b_\ell$ by the corresponding adjusted classes, the minimal-support argument of \cref{lem: find basis} gives
\[
b=\sum_{i=1}^m g_i\tensor c_i, \text{ for some } g_i\in L \smallsetminus M_\IQ,\quad c_i\in F,
\]
with the $c_i$ linearly independent over $\IQ$. Write $c_i=\{c_{i,\ell}\}_\sU$. On a large subset $\Lambda_0\subseteq\Lambda$,
\begin{equation}\label{eq:residual-hom-identity}
b_\ell=\sum_i c_{i,\ell}g_i\quad\text{in }L/\ell L.
\end{equation}

For $\ell\in\Lambda_0$, set $H_\ell:=\{(c_{1,\ell}x,\ldots,c_{m,\ell}x,\psi_\ell(x)):x\in A[\ell]\}\subseteq(A^m\times B)[\ell]$ and let $H$ be the subgroup of generated by these $H_\ell$'s for $\ell \in \Lambda_0$. These groups are $\Gal_K$-stable and, by \eqref{eq:residual-hom-identity}, reduce into the graph of
\[
G_0:A_0^m\to B_0,\qquad G_0(y_1,\ldots,y_m):=\sum_i g_i(y_i).
\]
Let $C:=(\overline H^\Zar)^0$. By \cref{prop:app-tube} and \cite[\S7.5, Thm.~4]{BLR90}, the reduction $C_0\to A_0^m\times B_0$ is a closed immersion with image contained in $\Gamma_{G_0}$; moreover, $H_\ell\subseteq C$ for all $\ell$ in a large subset of $\Lambda_0$.

If the projection $C\to A^m$ were not surjective, the $K$-simplicity of $A$ would give $d_1,\ldots,d_m\in\End_K(A)$, not all zero, such that $\sum_i d_i c_{i,\ell}=0$ in $\End_K(A) \tensor \IF_\ell $ on a large set. Taking ultraproducts gives $\sum_i d_i\tensor c_i=0$, contradicting the $\IQ$-linear independence of the $c_i$. Thus $C\to A^m$ is surjective. Since $C_0\subseteq\Gamma_{G_0}$, dimension forces $C_0=\Gamma_{G_0}$, and hence $C$ is the graph of a $K$-homomorphism $A^m\to B$ lifting $G_0$. Restricting to the factors of $A^m$ shows that every $g_i$ lies in $M$, a contradiction.
\end{proof}

\paragraph{Residual semisimplicity.} The following is the residual analogue of \cref{thm:main-semisimplicity}.

\begin{theorem}\label{thm:app-residual-semisimplicity}
Let $A$ be an abelian variety over a number field $K$. For all but finitely many primes $\ell$, the $\IF_\ell[\Gal_K]$-module $A[\ell]$ is semisimple.
\end{theorem}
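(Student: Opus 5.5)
We follow the proof of \cref{thm:main-semisimplicity}, replacing $\ell$-adic objects by ultraproducts of mod-$\ell$ objects as in \cref{thm:app-residual-hom}. By standard reductions we may take $A$ to be $K$-simple, so that $D:=\End_K(A)_\IQ$ is a division algebra. Fix a good-reduction place $v$ above an odd unramified $p$, and set $B:=\End_k(A_0)_\IQ$. Suppose, for contradiction, that $A[\ell]$ is not semisimple for $\ell$ in an infinite set $\Lambda$.

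After discarding finitely many primes, we may assume the following for every $\ell\in\Lambda$:
\begin{itemize}
\item $A_0[\ell]$ is a semisimple $\Gal_k$-module;
\item $\End(A_0)\tensor\IF_\ell=\End_{\Gal_k}(A_0[\ell])$ by \cite{Zar77};
\item $\End_K(A)\tensor\IF_\ell=\End_{\Gal_K}(A[\ell])$ by \cref{thm:app-residual-hom};
\item $D_\ell:=\End_K(A)\tensor\IF_\ell$ and $B_\ell:=\End(A_0)\tensor\IF_\ell$ are semisimple;
\item $\End_K(A)\tensor\IF_\ell\into\End(A_0)\tensor\IF_\ell$.
\end{itemize}
Semisimplicity of $D_\ell$ and $B_\ell$ holds because a semisimple $\IQ$-algebra with a fixed order has semisimple reduction for almost all $\ell$, the discriminant being nonzero. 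The injectivity holds since the torsion of the cokernel is $p$-primary.

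The linear algebra of \cref{lem: relative position} and \cref{cor: relative position} works over any field. Hence for each $\ell\in\Lambda$ there is a $\Gal_K$-stable subspace $W_\ell\subseteq A[\ell]$ and an element
\[
q_\ell\in L_{B_\ell}(W_\ell)\smallsetminus B_\ell\cdot L_{D_\ell}(W_\ell).
\]

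Next we pass to the ultraproduct $F=\prod_\sU\IF_\ell$. The subspaces $W_\ell$ have bounded dimension. Put $D_F:=D\tensor_\IQ F$, $B_F:=B\tensor_\IQ F$ and $q:=\{q_\ell\}_\sU$. By {\L}o\'s, the left ideal $L_F:=\prod_\sU L_{D_\ell}(W_\ell)\subseteq D_F$ satisfies $q\notin B_F\cdot L_F$. The quotient identification from \cref{thm:main-semisimplicity} remains formal:
\[
B_F/B_F L_F=B\tensor_D(D_F/L_F).
\]
So \cref{lem: minimal tensor expression} gives a minimal expression $\bar q=\sum_{i=1}^m g_i\tensor\bar c_i$. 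Here the $g_i\in\End(A_0)$ are right $D$-independent and the $\bar c_i\in D_F/L_F$ are left $D$-independent. Lifting, we get $c_i=\{c_{i,\ell}\}_\sU\in D_F$ with $c_{i,\ell}\in\End_K(A)\tensor\IF_\ell$. Then $q':=\sum_i g_ic_i$ kills $\prod_\sU W_\ell$, so by {\L}o\'s $\sum_i g_ic_{i,\ell}$ kills $W_\ell$ on a large set.

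Now run the diagonal trick. Let $H$ be the subgroup of $A^m(\bar K)$ generated by the sets
\[
H_\ell:=\{(c_{1,\ell}x,\dots,c_{m,\ell}x):x\in W_\ell\}
\]
for $\ell$ in a large set. Each $H_\ell$ is $\Gal_K$-stable because the $c_{i,\ell}$ are $K$-endomorphisms. It reduces into $\ker(\sum_i g_i:A_0^m\to A_0)$. By \cref{prop:app-tube}, $C:=(\overline H^\Zar)^0$ reduces into this kernel, and $H_\ell\subseteq C$ for $\ell$ in a large set.

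The map $\sum_i g_i$ is nonzero since the $g_i$ are right $D$-independent, so $C\neq A^m$. Then $K$-simplicity (Poincar\'e) gives $d_i\in\End_K(A)$, not all zero, with $\sum_i d_ic_{i,\ell}$ killing $W_\ell$ for $\ell$ in a large set. Thus $\sum_i d_ic_{i,\ell}\in L_{D_\ell}(W_\ell)$ on a large set, so $\sum_i d_i\bar c_i=0$ in $D_F/L_F$. This contradicts the left $D$-independence of the $\bar c_i$.

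The main obstacle is the uniform bookkeeping in the ultraproduct. We must check three things. First, {\L}o\'s transports $L_{D_\ell}(W_\ell)$ to a left ideal of $D_F$ compatibly with $B_F L_F$; the containment $B_FL_F\subseteq\prod_\sU B_\ell L_{D_\ell}(W_\ell)$ suffices. Second, the minimal-support lemma over $D_F$ must produce integral-after-scaling coefficients. Third, the finitely many exceptional primes must be handled uniformly: semisimplicity of $D_\ell$ and $B_\ell$, and the component group of $\overline H^\Zar$.
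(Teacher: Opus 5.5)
Your argument is correct and is essentially the paper's proof: the same ultraproduct, minimal tensor expression over the division algebra $D$, diagonal trick, varying-prime tube argument and Poincar\'e reducibility. The one difference is where \cref{lem: relative position} is applied. You apply it at each finite level $\ell$ to produce $q_\ell$, which costs you the extra input that $\sO/\ell\sO$ is semisimple for almost all $\ell$; your discriminant argument handles this correctly. The paper instead applies it once over the characteristic-zero field $F$, where $D_F$ and $E_F$ (your $B_F$) are automatically semisimple, and then descends the resulting idempotent $d\in D_F$ by {\L}o\'s to idempotents $d_\ell$ with kernel $W_\ell$.

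Incidentally, \cref{thm:app-residual-hom} is not needed in your list of hypotheses. Only the direction of \cref{lem: relative position} that produces an idempotent of $\End_K(A)\tensor\IF_\ell$ with kernel $W_\ell$ is used, and such an idempotent is automatically $\Gal_K$-equivariant.
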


\begin{proof}
We follow the proof of \cref{thm:main-semisimplicity}. By standard reductions, assume that $A$ is $K$-simple. Choose a good-reduction place $v$ above an odd prime $p$ unramified in $K$, with residue field $k$, and write $A_0$ for the special fiber. Put $\sO:=\End_K(A)$, $D:=\sO\tensor_\IZ\IQ$, and $E:=\End_k(A_0)\tensor_\IZ\IQ$. Then $D\subseteq E$ and $D$ is a division algebra.

Suppose that $A[\ell]$ is not semisimple for an infinite set $\Lambda$ of $\ell$'s. Again by \cite{Zar77}, after discarding $p$ and finitely many primes, the $\Gal_k$-module $A_0[\ell]$ is semisimple and $\End_k(A_0)/\ell\End_k(A_0)\to\End_{\Gal_k}(A_0[\ell])$ is an isomorphism for every $\ell \in \Lambda$. For each remaining $\ell \in \Lambda$, choose a $\Gal_K$-stable subspace $W_\ell\subseteq A[\ell]$ with no stable complement. Fix a nonprincipal ultrafilter $\sU$ on $\Lambda$ and put
\[
F:=\prod_\sU\IF_\ell,\qquad V:=\prod_\sU A[\ell],\qquad W:=\prod_\sU W_\ell,\qquad D_F:=D\tensor_\IQ F,\qquad E_F:=E\tensor_\IQ F.
\]

Suppose that $L_{E_F}(W)\neq E_F L_{D_F}(W)$. Repeating the minimal-tensor-expression argument in the proof of \cref{thm:main-semisimplicity}, we obtain right $D$-linearly independent $g_1,\ldots,g_m\in\End_k(A_0)$ and $c_1,\ldots,c_m\in D_F$ whose classes in $D_F/L_{D_F}(W)$ are left $D$-linearly independent, such that
\[
\Big(\sum_i g_i c_i\Big)W=0.
\]
Represent $c_i$ by $c_{i,\ell}\in\sO/\ell\sO$ using \eqref{eq:residual-ultraproduct-coordinates}. On a large subset $\Lambda_0$, one has $(\sum_i g_i c_{i,\ell})W_\ell=0$. For $\ell\in\Lambda_0$, set
\[
H_\ell:=\{(c_{1,\ell}x,\ldots,c_{m,\ell}x):x\in W_\ell\},\qquad H:=\langle H_\ell:\ell\in\Lambda_0\rangle.
\]
The $H_\ell$ are $\Gal_K$-stable and reduce into the kernel of the fixed nonzero homomorphism
\[
G_0:A_0^m\to A_0,\qquad G_0(y_1,\ldots,y_m):=\sum_i g_i(y_i).
\]
Let $C:=(\overline H^\Zar)^0$. By \cref{prop:app-tube}, $H_\ell\subseteq C$ for all $\ell$ in a large subset of $\Lambda_0$, and $C_0\subseteq\ker(G_0)$. If $C\neq A^m$, the $K$-simplicity of $A$ gives fixed $d_1,\ldots,d_m\in\End_K(A)$, not all zero, such that $(\sum_i d_i c_{i,\ell})W_\ell=0$ on a large set. Taking ultraproducts gives $\sum_i d_i c_i\in L_{D_F}(W)$, contradicting the left $D$-linear independence of the classes of the $c_i$. Thus $C=A^m$, contradicting $C_0\subseteq\ker(G_0)$ and $G_0\neq0$. 

Now we know $L_{E_F}(W)=E_F L_{D_F}(W)$. For each $\ell \in \Lambda$, semisimplicity of $A_0[\ell]$ gives a $\Gal_k$-equivariant projector with kernel $W_\ell$, and Zarhin's theorem identifies it with an idempotent $a_\ell\in\End_k(A_0)/\ell\End_k(A_0)$. Their ultraproduct is an idempotent $a\in E_F$ with kernel $W$. Since $D_F$ and $E_F$ are semisimple, \cref{lem: relative position} gives an idempotent $d\in D_F$ with $\ker d=W$. Representing $d$ by $d_\ell\in\sO/\ell\sO$, {\L}o\'s's theorem gives $d_\ell^2=d_\ell$ and $\ker d_\ell=W_\ell$ on a large set. Then $\mathrm{im}(d_\ell)$ is a $\Gal_K$-stable complement to $W_\ell$, a contradiction.
\end{proof}

\subsection{From stable lattices to isomorphism classes}\label{sec:02-isogeny}

We now prove \cref{thm:app-isogeny-finiteness}. We first establish a finiteness result for Galois-stable lattices. For a finite-dimensional $\IQ_\ell$-vector space $V_\ell$, a lattice means a finite free $\IZ_\ell$-submodule $L_\ell\subseteq V_\ell$ spanning $V_\ell$ over $\IQ_\ell$.

\begin{proposition}\label{thm:app-lattice-finiteness}
Let $\Gamma$ be a group and $\sO$ a unital, possibly noncommutative finite free $\IZ$-algebra. For every prime $\ell$, let $T_\ell$ be a finite free $\IZ_\ell$-module with commuting $\IZ_\ell$-linear actions of $\Gamma$ and $\sO$, and put $V_\ell:=T_\ell[1/\ell]$. Let $\mathscr X$ be the set of families $L=(L_\ell)_\ell$ such that each $L_\ell\subseteq V_\ell$ is a $\Gamma$-stable lattice and $L_\ell=T_\ell$ for all but finitely many $\ell$. Assume:
\begin{enumerate}[(i)]
\item for every $\ell$, the $\Gamma$-module $V_\ell$ is semisimple and $\sO\tensor_\IZ\IZ_\ell\to\End_\Gamma(T_\ell)$ is an isomorphism;
\item for all but finitely many $\ell$, the $\Gamma$-module $T_\ell/\ell T_\ell$ is semisimple and $\sO\tensor_\IZ\IF_\ell\to\End_\Gamma(T_\ell/\ell T_\ell)$ is an isomorphism.
\end{enumerate}
Then $D^\times\backslash\mathscr X$ is finite, where $D:=\sO\tensor_\IZ\IQ$.
\end{proposition}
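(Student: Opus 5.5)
This is the classical Faltings/Zarhin lattice-finiteness argument, and I would run it prime by prime, then glue. The group $D^\times$ acts on $\mathscr X$ because $\sO$ commutes with $\Gamma$. For $d\in D^\times$ one has $d\,\sO_\ell=\sO_\ell$ at almost all $\ell$, so $dL\in\mathscr X$.

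\emph{Step 1 (one fixed prime).} Fix $\ell$. I claim there is $N_\ell$ such that every $\Gamma$-stable lattice $L_\ell$ equals $u\,L'_\ell$ with $u\in(\sO\tensor\IQ_\ell)^\times$ and $\ell^{N_\ell}T_\ell\subseteq L'_\ell\subseteq T_\ell$. Scaling by powers of $\ell$ reduces to $L_\ell\subseteq T_\ell$ and $L_\ell\not\subseteq \ell T_\ell$. Suppose the index were unbounded. Consider the finite-length quotients $T_\ell/L_\ell$; a compactness argument (König's lemma on the tree of lattices, or the $\ell$-adic limit of the normalized inclusions) yields a nonzero $\Gamma$-stable $\IZ_\ell$-direct summand $T'\subseteq T_\ell$ and lattices $L^{(n)}\subseteq T_\ell$ with $L^{(n)}\subseteq T'+\ell^nT_\ell$ and $L^{(n)}\not\subseteq \ell T_\ell$.

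By semisimplicity in (i), $V_\ell=T'_{\IQ}\oplus W$ with $W$ $\Gamma$-stable. The projector $e$ onto $T'_{\IQ}$ lies in $\End_\Gamma(V_\ell)=\sO\tensor\IQ_\ell$, so some multiple $\ell^a e$ lies in $\sO\tensor\IZ_\ell$. Replace $L^{(n)}$ by $(e+\ell^{n-a-c}(1-e))L^{(n)}$, which is a translate by a unit of $\sO\tensor\IQ_\ell$. This "collapses" the $W$-direction and reduces the index, so induction on the index gives the claim. I would streamline this to the standard Tate/Zarhin argument: choose $L_\ell$ of minimal index within its $(\sO\tensor\IQ_\ell)^\times$-orbit. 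Then show the index is bounded, using that $\End_\Gamma(L_\ell)=\sO_\ell$-type orders are all conjugate up to bounded index inside $\End_\Gamma(V_\ell)$. Consequently there are finitely many orbits at $\ell$.

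\emph{Step 2 (almost all primes).} For $\ell$ outside a finite set $S$ where (ii) holds, I claim every $\Gamma$-stable lattice is $\ell^n u T_\ell$ with $u\in(\sO\tensor\IZ_\ell)^\times$ after modification. More precisely, if $\ell T_\ell\subsetneq L\subsetneq T_\ell$ with $L$ stable, then $\bar L=L/\ell T_\ell$ is a $\Gamma$-submodule of $T_\ell/\ell T_\ell$. By semisimplicity it has a stable complement, whose projector is an idempotent in $\End_\Gamma=\sO\tensor\IF_\ell$. Lift it to an idempotent $e\in\sO\tensor\IZ_\ell$ (idempotents lift, since $\sO\tensor\IZ_\ell$ is $\ell$-adically complete). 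Then $L=(e+\ell(1-e))T_\ell$, and $e+\ell(1-e)\in(\sO\tensor\IQ_\ell)^\times$. Iterating via the elementary-divisor filtration shows that every stable lattice lies in the $(\sO\tensor\IQ_\ell)^\times$-orbit of $T_\ell$.

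\emph{Step 3 (globalize).} By Steps 1--2, $\mathscr X$ is contained in finitely many orbits of the restricted product $\prod'_\ell(\sO\tensor\IQ_\ell)^\times$, the finite adelic units of $D$, acting on a fixed finite set of families. The stabilizer of a family $L$ contains the open compact subgroup $U_L$ of units of the order stabilizing $L$. So the number of $D^\times$-orbits is bounded by the finitely many double cosets $D^\times\backslash(D\tensor\IA_f)^\times/U$, which is finiteness of the class number of an order in a semisimple $\IQ$-algebra (Jordan--Zassenhaus / Borel).

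I expect the main obstacle to be Step 1, the uniform bound at a single prime. The clean route I would try first is Zarhin's: if infinitely many orbits occurred, extract by compactness a convergent sequence of normalized lattices whose limit gives a $\Gamma$-stable submodule. Then use the idempotent in $\sO\tensor\IQ_\ell$ from (i) to contradict the minimality of the index.
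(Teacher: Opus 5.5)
Your proposal is correct, and its global structure is the paper's. At each $\ell$ there are finitely many $(D\tensor\IQ_\ell)^\times$-orbits of $\Gamma$-stable lattices. For almost all $\ell$ there is a single orbit (that of $T_\ell$), obtained by lifting the equivariant idempotent from $\sO\tensor\IF_\ell$. One then gets finitely many $D_f^\times$-orbits with compact open stabilizers and applies Borel's class-number finiteness. Your Steps 2 and 3 are essentially the paper's.

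The real difference is Step 1. The paper takes the $\IZ_\ell$-span $R_\ell$ of $\Gamma$ and a maximal order $S_\ell\supseteq R_\ell$ with $\ell^aS_\ell\subseteq R_\ell$. It then uses Reiner's theorem that lattices over a maximal order over $\IZ_\ell$ are determined by their rational span. Hence, after a $D_\ell^\times$-translation, every stable lattice satisfies $\ell^aS_\ell T_\ell\subseteq L_\ell\subseteq S_\ell T_\ell$. This is short and gives the bound directly, but it imports integral representation theory.

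Your Zarhin-style route works differently. You pick an index-minimal sublattice of $T_\ell$ in each orbit and extract a $\Gamma$-stable limit $T'$. You then use the projector $e\in D_\ell$ onto $T'\tensor\IQ_\ell$ along a stable complement $W$ to lower the index. This needs only compactness, Nakayama and equivariant projectors. It also lets one kind of unit, $e+\ell^{-1}(1-e)$ (the inverse of the $e+\ell(1-e)$ of Step 2, with a different $e$), do the work at every prime. The price is that it is non-effective.

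As written, Step 1 needs three repairs.

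First, the unit goes the other way. Suppose $\ell^ae\in\sO\tensor\IZ_\ell$ and $L^{(n)}\subseteq T'+\ell^{j}T_\ell$ with $j\ge a+1$. Then $(e+\ell^{-1}(1-e))L^{(n)}$ lies in $T_\ell$ with index smaller by a factor $\ell^{\dim W}$, contradicting minimality. Your $(e+\ell^{n-a-c}(1-e))L^{(n)}$ instead raises the index.

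Second, what you need about $T'$ is not that it is nonzero or a direct summand, but that $T'\tensor\IQ_\ell\neq V_\ell$, i.e.\ $W\neq0$.
\begin{itemize}
\item Take $T'$ to be the genuine limit, so that $T'+\ell^kT_\ell=L^{(n)}+\ell^kT_\ell$ for $n\gg_k0$.
\item If $\ell^NT_\ell\subseteq T'$, then $\ell^NT_\ell\subseteq L^{(n)}+\ell^{N+1}T_\ell$ for large $n$.
\item Hence $\ell^NT_\ell\subseteq L^{(n)}$ by Nakayama, contradicting the unboundedness of the minimal indices.
\end{itemize}

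Third, drop the assertion that the orders $\End_\Gamma(L_\ell)$ are ``conjugate up to bounded index''. It is unjustified and not needed.

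Separately, the opening claim of Step 2, with $u\in(\sO\tensor\IZ_\ell)^\times$, is too strong. Your ``more precisely'' version, with $u\in(\sO\tensor\IQ_\ell)^\times$, is the correct statement.
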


\begin{proof}
By (i), $D$ is semisimple and $\End_\Gamma(V_\ell)=D\tensor_\IQ\IQ_\ell$. Hence \cref{lem:app-local-lattices} shows that the $\Gamma$-stable lattices in $V_\ell$ have finitely many $(D\tensor_\IQ\IQ_\ell)^\times$-orbits for every $\ell$, while (i)--(ii) give exactly one orbit for all but finitely many $\ell$. Thus $D_f:=D\tensor_\IQ\IA_f$ has finitely many $D_f^\times$-orbits on $\mathscr X$.

Fix a representative $M=(M_\ell)_\ell$ of such an orbit and let $U_M\subseteq D_f^\times$ be its stabilizer, which is compact open. The $D^\times$-orbits in $D_f^\times M$ are parametrized by $D^\times\backslash D_f^\times/U_M$, which is finite by Borel's class-number theorem \cite[Theorem~5.1]{Bor63}. Since there are only finitely many $D_f^\times$-orbits on $\mathscr X$, the result follows.
\end{proof}

\begin{lemma}\label{lem:app-local-lattices}
Let $\Gamma$ act $\IZ_\ell$-linearly on a nonzero finite free $\IZ_\ell$-module $T_\ell$, and put $V_\ell:=T_\ell\tensor_{\IZ_\ell}\IQ_\ell$ and $D_\ell:=\End_\Gamma(V_\ell)$. If $V_\ell$ is semisimple as a $\Gamma$-module, then the $\Gamma$-stable lattices in $V_\ell$ have finitely many $D_\ell^\times$-orbits. If, in addition, $T_\ell/\ell T_\ell$ is semisimple and $\End_\Gamma(T_\ell)\to\End_\Gamma(T_\ell/\ell T_\ell)$ is surjective, there is exactly one such orbit.
\end{lemma}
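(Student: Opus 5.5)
First I reduce to a statement about finitely generated modules over a $\IZ_\ell$-order. Let $R\subseteq\End_{\IZ_\ell}(T_\ell)$ be the closure of the $\IZ_\ell$-span of the image of $\Gamma$; this is a finite free $\IZ_\ell$-algebra. The $\Gamma$-stable lattices in $V_\ell$ are exactly the $R$-stable lattices, and $R_\IQ:=R\tensor\IQ_\ell\subseteq\End(V_\ell)$ is semisimple because $V_\ell$ is a semisimple $\Gamma$-module. Moreover, $D_\ell=\End_{R_\IQ}(V_\ell)$. Every $R$-stable lattice $L$ is scaled by a power of $\ell$ to lie between $\ell^NT_\ell$ and $T_\ell$. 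The goal is to bound $N$ uniformly after acting by $D_\ell^\times$.

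The key step is to replace $R$ by a maximal order, or at least by a hereditary order, containing it. Choose a maximal $\IZ_\ell$-order $\Lambda$ in $R_\IQ$ with $R\subseteq\Lambda$. Then $\ell^aR\supseteq\ell^{a}\Lambda$ and $\ell^a\Lambda\subseteq R$ for some fixed $a$, since both are lattices in $R_\IQ$.

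Given an $R$-lattice $L$, the lattice $\Lambda L$ is a $\Lambda$-lattice with $\ell^a\Lambda L\subseteq L\subseteq\Lambda L$. Over a maximal order in a semisimple $\IQ_\ell$-algebra, lattices in a fixed module $V_\ell$ are classified by their isomorphism class. This holds because $\Lambda$ is a product of matrix rings over maximal orders in local division algebras, so every lattice is a direct sum of copies of the finitely many indecomposable projectives, with multiplicities determined by $V_\ell$. Thus all $\Lambda$-lattices in $V_\ell$ are isomorphic, and any isomorphism extends to an element of $\End_{R_\IQ}(V_\ell)^\times=D_\ell^\times$. Translating by $D_\ell^\times$, we may therefore assume $\Lambda L=\Lambda T_\ell=:T'$, a fixed lattice. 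Then $L$ lies between $\ell^aT'$ and $T'$, and there are only finitely many such subgroups. This proves the first claim. The main point to verify here is the classification of lattices over maximal orders, which is standard (Reiner, \emph{Maximal Orders}).

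For the second claim, assume $\bar T:=T_\ell/\ell T_\ell$ is semisimple and $\End_\Gamma(T_\ell)\to\End_\Gamma(\bar T)$ is surjective. Let $L$ be any stable lattice. After scaling by a power of $\ell$ (which lies in $D_\ell^\times$), we may assume $L\subseteq T_\ell$ and $L\not\subseteq\ell T_\ell$. I argue by induction on the length of $T_\ell/L$.

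The image $\bar L\subseteq\bar T$ is a nonzero $\Gamma$-submodule. By semisimplicity, $\bar T=\bar L\oplus\bar W$ for some stable complement $\bar W$. Let $\bar e$ be the corresponding idempotent projector onto $\bar L$. By surjectivity, $\bar e$ lifts to some $e\in\End_\Gamma(T_\ell)$, and $e$ can be adjusted to an idempotent lift, since idempotents lift along $\End_\Gamma(T_\ell)\to\End_\Gamma(T_\ell)/\ker$, a map with nilpotent-modulo-$\ell$ kernel, by $\ell$-adic completeness.

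Set $u:=e+\ell(1-e)\in\End_\Gamma(T_\ell)\cap D_\ell^\times$. Then $uT_\ell=eT_\ell\oplus\ell(1-e)T_\ell=L+\ell T_\ell$, which contains $L$ and is strictly smaller than $T_\ell$ unless $\bar L=\bar T$. Applying $u^{-1}$ replaces $L$ by $u^{-1}L\subseteq T_\ell$, and the length of $T_\ell/u^{-1}L=$ length of $uT_\ell/L$, which is strictly smaller. This decreases the length, and in the case $\bar L=\bar T$ Nakayama gives $L=T_\ell$ directly. By induction, $L$ lies in the $D_\ell^\times$-orbit of $T_\ell$.

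The main obstacle I expect is the first part: the maximal-order classification and the fact that $R$-module isomorphisms are realised inside $D_\ell^\times$. Both are handled by the standard structure theory above.
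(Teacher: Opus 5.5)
Your proposal is correct and follows the paper's proof essentially step for step. You enlarge the span of $\Gamma$ to a maximal order $\Lambda$, and use that all $\Lambda$-lattices in $V_\ell$ are isomorphic: the paper cites Reiner's Theorem~18.10 for this, where you sketch the structure theory. This sandwiches any stable lattice between $\ell^a\Lambda T_\ell$ and $\Lambda T_\ell$. For the second part you induct on the index using $e+\ell(1-e)$, with $e$ an idempotent lift, exactly as the paper does.

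The only blemish is the slip ``$\ell^aR\supseteq\ell^a\Lambda$'', which is backwards, but only the correct inclusion $\ell^a\Lambda\subseteq R$ is ever used.
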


\begin{proof}
Let $R_\ell$ be the $\IZ_\ell$-span of the image of $\Gamma$ in $\End_{\IZ_\ell}(T_\ell)$ and set $E_\ell:=R_\ell\tensor_{\IZ_\ell}\IQ_\ell$. Then $\End_{E_\ell}(V_\ell)=D_\ell$, and $E_\ell$ is semisimple because $V_\ell$ is a faithful semisimple $E_\ell$-module. Choose a maximal $\IZ_\ell$-order $S_\ell\subseteq E_\ell$ containing $R_\ell$ \cite[(10.4)]{Rei03}, and choose $a\geq0$ with $\ell^aS_\ell\subseteq R_\ell$.

By \cite[Theorem~18.10]{Rei03}, any two $S_\ell$-stable lattices in $V_\ell$ are isomorphic as $S_\ell$-modules; extending scalars gives an element of $D_\ell^\times$. Thus the $S_\ell$-stable lattices form one $D_\ell^\times$-orbit. Every $\Gamma$-stable lattice $L_\ell$ satisfies $\ell^aS_\ell L_\ell\subseteq L_\ell\subseteq S_\ell L_\ell$. After moving $S_\ell L_\ell$ to $S_\ell T_\ell$ by an element of $D_\ell^\times$, we have $\ell^aS_\ell T_\ell\subseteq L_\ell\subseteq S_\ell T_\ell$. Since there are only finitely many such intermediate lattices, the first assertion holds. 

For the second, let $L_\ell$ be a $\Gamma$-stable lattice and scale it by a power of $\ell$ so that $L_\ell\subseteq T_\ell$. We induct on $[T_\ell:L_\ell]$. If $L_\ell\neq T_\ell$, then $L_\ell+\ell T_\ell\neq T_\ell$ by Nakayama. Semisimplicity gives a $\Gamma$-equivariant projector $\bar e$ on $T_\ell/\ell T_\ell$ with image $(L_\ell+\ell T_\ell)/\ell T_\ell$. By surjectivity and idempotent lifting, $\bar e$ lifts to an idempotent $e\in\End_\Gamma(T_\ell)$. Set $d:=e+\ell(1-e)\in D_\ell^\times$. Then $dT_\ell=L_\ell+\ell T_\ell$, so $d^{-1}L_\ell\subseteq T_\ell$ is $\Gamma$-stable and has strictly smaller index. Now we conclude by induction.
\end{proof}

\begin{proof}[Proof of \cref{thm:app-isogeny-finiteness}]
The case $\dim A=0$ is immediate, so assume $\dim A>0$. Apply \cref{thm:app-lattice-finiteness} with $\Gamma:=\Gal_K$, $\sO:=\End_K(A)$, and $T_\ell:=T_\ell A$. Conditions (i) follow from \cref{thm: Faltings hom,thm:main-semisimplicity}, and (ii) from \cref{thm:app-residual-hom,thm:app-residual-semisimplicity}. Thus, for $D:=\End_K(A)\tensor_\IZ\IQ$, the set $D^\times\backslash\mathscr X$ is finite.

Let $\mathcal I(A)$ be the set of $K$-isomorphism classes of abelian varieties $K$-isogenous to $A$. For $B\in\mathcal I(A)$, choose a $K$-isogeny $f_B:B\to A$ and set $L(B)_\ell:=f_B(T_\ell B)$. Then $L(B):=(L(B)_\ell)_\ell\in\mathscr X$, and the class of $L(B)$ in $D^\times\backslash\mathscr X$ is independent of all choices. Hence we obtain a map $\Phi:\mathcal I(A)\to D^\times\backslash\mathscr X$.

It remains to show that $\Phi$ is injective, so that we are done because $D^\times\backslash\mathscr X$ is finite. If $\Phi(B)=\Phi(C)$, choose $d\in D^\times$ with $dL(B)=L(C)$ and set $u:=f_C^{-1}df_B\in\Hom_K(B,C)\tensor_\IZ\IQ$. Then $u(T_\ell B)=T_\ell C$ for every $\ell$. This immediately implies that $u$ is a $K$-homomorphism. The same argument applied to $u^{-1}$ shows that $u$ is a $K$-isomorphism. 
\end{proof}

\printbibliography

\end{document}